\begin{document}
	
\theoremstyle{plain}
\newtheorem{lemma}{Lemma}
\numberwithin{lemma}{section}
\newtheorem{proposition}[lemma]{Proposition}
\newtheorem{corollary}[lemma]{Corollary}
\newtheorem{theorem}[lemma]{Theorem}
	
\theoremstyle{definition}
\newtheorem{definition}[lemma]{Definition}
\newtheorem{question}[lemma]{Question}

\theoremstyle{remark}
\newtheorem{remark}[lemma]{Remark}

\newcommand{\period}{\text{.}}
\newcommand{\comma}{\text{,}}

\newcommand{\dg}{\mathsf{deg}}
\newcommand{\n}{\mathbb{N}}
\newcommand{\nn}{\n^\n}
\newcommand{\dom}{\mathsf{dom}}
\newcommand{\cont}{\mathrel{\leq^*_{\mathsf{W}}}}
\newcommand{\contEq}{\mathrel{\equiv^*_{\mathsf{W}}}}
\newcommand{\contNeq}{\mathrel{\not\equiv^*_{\mathsf{W}}}}
\newcommand{\contStrict}{\mathrel{<^*_{\mathsf{W}}}}
\newcommand{\weih}{\mathrel{\leq_{\mathsf{W}}}}
\newcommand{\weihEq}{\mathrel{\equiv_{\mathsf{W}}}}
\newcommand{\weihNeq}{\mathrel{\not\equiv_{\mathsf{W}}}}
\newcommand{\weihStrict}{\mathrel{<_{\mathsf{W}}}}
\newcommand{\wkl}{\mathsf{WKL}}
\newcommand{\lpo}{\mathsf{LPO}}
\newcommand{\llpo}{\mathsf{LLPO}}
\newcommand{\lposqrt}{\lpo^{\nicefrac{1}{2}}}
\newcommand{\id}{\mathsf{id}}
\newcommand{\one}{\mathbf{1}}
\newcommand{\zero}{\mathbf{0}}
\newcommand{\restr}[2]{#1 \mathbin{\upharpoonright}_{#2}}

\title{Weihrauch degrees without roots}
\author[P. Uftring]{Patrick Uftring}

\address{Patrick Uftring, University of W\"urzburg, Institute of Mathematics, Emil-Fischer-Stra{\ss}e~40, 97074 W\"urzburg, Germany}
\email{patrick.uftring@uni-wuerzburg.de}
\thanks{The author is funded by the Deutsche Forschungsgemeinschaft (DFG, German Research Foundation) -- Project number 460597863.}

\begin{abstract}
	We answer the following question by Arno Pauly: ``Is there a square-root operator on the Weihrauch degrees?''. In fact, we show that there are uncountably many pairwise incomparable Weihrauch degrees without any roots. We also prove that the omniscience principles of $\lpo$ and $\llpo$ do not have roots.
\end{abstract}

\maketitle

\section{Introduction}

Weihrauch reducibility captures the idea of using a mathematical problem exactly once as an oracle in order to solve some other problem in an otherwise computable manner. We give a short introduction to this topic. For more details, see~\cite{BGP}. Given two spaces $X$ and $Y$, a problem is simply given by a relation between $X$ and $Y$. We interpret any such relation $R$ as a partial multi-valued function $f:\subseteq X \rightrightarrows Y$, where $x \in X$ is in the domain of $f$ (write $\dom(f)$) if and only if there exists some $y \in Y$ with $x \mathrel{R} y$. Then, $f(x)$ is the set of all such $y$. Elements $x \in \dom(f)$ are interpreted as \emph{instances} of the problem $f$, and elements $y \in f(x)$ are interpreted as \emph{solutions} for the given instance $x$ of $f$. For example, $\wkl$ is the problem that takes any infinite binary tree as an instance and returns a path through this tree as solution. Notice that $\wkl$ is multi-valued since there may be multiple possible paths.

In the context of Weihrauch reducibility, problems are partial multi-valued functions $f:\subseteq X \rightrightarrows Y$ whose spaces $X$ and $Y$ are so-called \emph{represented spaces} (cf.~\cite{KreitzWeihrauch85}). Since we are only concerned with algebraical properties, we can restrict ourselves to the case $X := Y:= \nn$ (cf.~\cite[Lemma~11.3.8]{BGP}). Under this restriction, Weihrauch reducibility may be defined as follows:
\begin{definition}
	Given two problems $f, g:\subseteq \nn \rightrightarrows \nn$, we say that $f$ is \emph{Weihrauch reducible} to $g$ (write $f \weih g$) if and only if there are partial computable functions $h, k:\subseteq \nn \to \nn$ such that for any instance $x \in \dom(f)$, the value $k(x)$ is an instance of $g$ and any solution $y$ for $k(x)$ of $g$ results in a solution $h(\langle x, y \rangle)$ for $x$ of~$f$.
\end{definition}
Here, we write $\langle x, y \rangle$ for one of the usual (uniformly) computable ways to express two number sequences $x, y \in \nn$ as a single one. In the following, we will often say that $h$ and $k$ (in this order) \emph{realize} the Weihrauch reduction $f \weih g$.

Given two problems $f$ and $g$, we may write $f \weihEq g$ if both $f \weih g$ and $g \weih f$ hold. Taking the quotient of all problems with respect to $\weihEq$ results in the lattice structure of \emph{Weihrauch degrees} (cf.~\cite[Theorem~11.3.9]{BGP}, \cite[Theorem~3.14]{BG11}, \cite[Corollary~4.7]{Pauly10}). Finally, we write $f \weihStrict g$, if both $f \weih g$ and $f \weihNeq g$ hold.
Interesting examples of problems include the following (cf.~\cite{BishopBridges85} and \cite{BridgesRichman87} for the origins of $\lpo$ and $\llpo$, see \cite[Definition~6.1]{BG11} for the definitions that we are using):
\begin{definition}
	Let us express any natural number $n \in \n$ by a sequence $\mathbf{n} \in \nn$ where every member of $\mathbf{n}$ is equal to $n$. We define the following problems:
	\begin{itemize}
		\item The identity problem $\id: \nn \to \nn$ with
		\begin{equation*}
			\id(x) := x\period
		\end{equation*}
		\item The \emph{limited principle of omniscience} $\lpo: \nn \to \nn$ with
		\begin{equation*}
			\lpo(x) :=
			\begin{cases}
				\zero & \text{if $x$ has a zero,}\\
				\one & \text{otherwise.}
			\end{cases}
		\end{equation*}
		\item The \emph{lesser limited principle of omniscience} $\llpo:\subseteq \nn \to \nn$ where $\dom(\llpo)$ only contains number sequences that have \emph{at most one} non-zero member, with
		\begin{align*}
			\llpo(x) :={} &\{\zero \mid \text{if $x$ is zero at all \emph{even} indices}\} \cup{}\\&\{\one \mid \text{if $x$ is zero at all \emph{odd} indices}\}\period
		\end{align*}
	\end{itemize}
\end{definition}
Notice that $\llpo(\zero) = \{\zero, \one\}$ holds.
For these examples, we have the following reducibilities: $\id \weihStrict \llpo$, $\llpo \weihStrict \lpo$ (cf.~\cite[Theorem~4.2]{Weihrauch92}), and by transitivity $\id \weihStrict \lpo$. Sometimes, we want to use one problem after another. This is captured by the \emph{compositional product}:
\begin{definition}
	We define the \emph{composition} of problems $f$ and $g$ by ${f \circ g: \subseteq \nn \rightrightarrows \nn}$ with $\dom(f \circ g) := \{x \in \dom(g) \mid g(x) \subseteq \dom(f)\}$ and
	\begin{equation*}
		(f \circ g)(x) := \{z \in \nn \mid \text{there exists $y \in g(x)$ with $z \in f(y)$}\}\period
	\end{equation*}
	For arbitrary problems $f$ and $g$, we write $f * g$ for the \emph{compositional product}, i.e., the degree satisfying
	\begin{equation*}
		f * g \weihEq \max{}_{\weih}\{f' \circ g' \mid f' \weih f \text{ and } g' \weih g\}\period
	\end{equation*}
\end{definition}
Intuitively, $f * g$ is the problem where we first apply $g$ and then $f$ by transforming the solution given by $g$ into an instance of $f$.
The compositional product was first defined in \cite[Section~4]{BGM12}. Since maxima (or suprema) of sets of problems do not exist in general, the existence of compositional products for any given pair of problems had to be proven, which was done in \cite[Corollary~3.7]{BP18}. Moreover, compositions and compositional products enjoy properties like associativity (cf.~\cite[Proposition~2.4.1]{Bra98} and \cite[Proposition~4.2]{BP18}), and $\id$ is a neutral element with respect to $*$ (cf.~\cite[Observation~4.3]{BP18}).
With (compositional) products defined, we can ask about roots:
\begin{definition}
	Given a problem $f$ and a number $n \in \n$, let us write $f^{[n]}$ for the compositional product of $n$-many copies of $f$ (we set $f^{[0]} := \id$). Given a number $n \geq 2$, we call a problem $r$ an $n$-th root of $f$ if and only if $f \weihEq r^{[n]}$ holds.
\end{definition}
Now, we have all the necessary ingredients in order to talk about the following open question by Arno Pauly (cf.~\cite{Pauly20}):
\begin{question}
	Is there a square-root operator on the Weihrauch degrees?
\end{question}
We show that such an operator cannot exist by proving the following theorem:
\begin{theorem}\label{thm:uncount_no_root}
	There are uncountably many pairwise incomparable Weihrauch degrees that do not have an $n$-th root for any $n \geq 2$.
\end{theorem}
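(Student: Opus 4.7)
The plan is to identify two structural properties of a Weihrauch degree $f$ which together rule out the existence of any $n$-th root with $n \geq 2$. The first is \emph{strict growth}: $f \weihStrict f^{[2]}$. The second is \emph{indecomposability}: whenever $g \weihStrict f$ and $h \weihStrict f$, also $g * h \weihStrict f$. Theorem~\ref{thm:uncount_no_root} would then follow from exhibiting an uncountable pairwise-incomparable family of such $f$.

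Deriving the no-root conclusion from these two properties is clean. Suppose for contradiction $r^{[n]} \weihEq f$ for some $n \geq 2$. Since $r \weih r^{[n]} \weihEq f$, we have $r \weih f$. If $r \weihEq f$, then $f \weihEq r^{[n]} \weihEq f^{[n]}$; the chain $f \weih f^{[2]} \weih f^{[n]} \weihEq f$ forces $f \weihEq f^{[2]}$, contradicting strict growth. So $r \weihStrict f$. Now let $k \in \{2, \ldots, n\}$ be minimal with $r^{[k]} \weihEq f$ (such $k$ exists since $r^{[n]} \weihEq f$ while $r = r^{[1]} \weihStrict f$). Then $r^{[k-1]} \weihStrict f$ and $r \weihStrict f$, yet $r^{[k]} = r * r^{[k-1]} \weihEq f$, contradicting indecomposability.

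For the family itself, I would index $f$ by an uncountable antichain $\{A_\alpha\}$ of Turing degrees (such antichains of size $2^{\aleph_0}$ exist) and attach to each $A_\alpha$ a problem $f_\alpha$ encoding $A_\alpha$ in a stratified manner: instances indexed by natural-number queries, and solutions revealing fragments of $A_\alpha$ in controlled increments that chain nontrivially only when iterated, so that a single $f_\alpha$-call produces strictly less than two in sequence (giving strict growth). Pairwise Weihrauch-incomparability of the $f_\alpha$ would follow from an oracle-extraction lemma: any reduction $f_\alpha \weih f_\beta$ would force $A_\alpha \leq_{\mathsf{T}} A_\beta$, ruled out by the chosen Turing-antichain.

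The main obstacle is indecomposability. One must take an arbitrary Weihrauch reduction witnessing $f_\alpha \weih g * h$ and, from the realizer data --- the input fed to $g$ computed from the original instance, the input fed to $h$ computed from the original instance together with $g$'s output, and finally the assembly of $h$'s output into a solution of $f_\alpha$ --- argue that one of the two calls on its own already supplies enough information about $A_\alpha$ to realize $f_\alpha \weih g$ or $f_\alpha \weih h$, contradicting the strict inequalities. Ruling out hybrid realizers that cleverly split information about $A_\alpha$ between the two phases is the delicate point, and will require a combinatorial analysis tailored to the construction of $f_\alpha$; this is where the bulk of the technical work will lie.
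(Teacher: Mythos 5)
Your abstract reduction (strict growth plus indecomposability implies the absence of $n$-th roots) is essentially sound, up to one caveat you should make explicit: the steps $r \weih r^{[n]}$ and $r^{[k-1]} \weih r^{[n]}$ are not automatic in the Weihrauch lattice (e.g.\ $\id$ does not reduce to $\id * g$ when $g$ has empty domain), so you first need that $r$ has a computable instance in its domain, pulled back through the reduction from $f$. That is repairable. The genuine gap is everything else: you never construct the problems $f_\alpha$, and you prove neither strict growth, nor pairwise incomparability, nor --- crucially --- indecomposability for them. Indecomposability (that $g \weihStrict f_\alpha$ and $h \weihStrict f_\alpha$ force $g * h \weihStrict f_\alpha$, i.e.\ a form of $*$-irreducibility) is a substantially stronger property than ``no roots'', and it is exactly where the difficulty of the theorem lives; your own text concedes that excluding realizers which split the information about $A_\alpha$ between the two calls ``will require a combinatorial analysis'' that is not supplied. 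As it stands, the proposal reduces the theorem to an unproven and stronger claim about an unspecified family, so it is a plan rather than a proof.

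For comparison, the paper never needs $*$-irreducibility. It works with the concrete problems $w_a$ (computable instances $\mapsto$ all non-computable sequences; non-computable instances $\mapsto$ sequences of degree exactly $a$), for $a$ ranging over an uncountable Turing antichain, which gives pairwise incomparability almost immediately. The no-root argument is targeted at roots only: one shows that the property ``there is a computable instance all of whose solutions dominate a fixed degree'' transfers upward along $\weih$ (Lemma~\ref{lem:cmp_inst_deg_sol}), that any root $r$ of $w_a$ must have a computable instance with only non-computable solutions (Lemma~\ref{lem:r_cmp_inst_ncmp_sol}), and then, extracting a computable function $e$ from a reduction $g \weih w_a$ inside a representative of $r * w_a$, one gets $f \circ e \circ r \weih r * r \weih r^{[n]} \weihEq w_a$ with a computable instance whose solutions all compute $a$ --- contradicting the definition of $w_a$, since its computable instances admit non-computable solutions of degree not above $a$. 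If you want to salvage your route, you would either have to carry out the deferred combinatorial analysis for your $f_\alpha$, or replace indecomposability by a weaker, root-specific inheritance property in the spirit of Lemma~\ref{lem:r_cmp_inst_ncmp_sol}.
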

Moreover, we can also show that this result does not only hold for artificial Weihrauch degrees:
\begin{theorem}\label{thm:lpo_no_root}
	The problems $\lpo$ and $\llpo$ do not have an $n$-th root for any number $n \geq 2$.
\end{theorem}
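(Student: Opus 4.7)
The approach is a proof by contradiction. Suppose that $r^{[n]} \weihEq \lpo$ for some problem $r$ and some $n \geq 2$; the $\llpo$ case will be handled analogously. Since $\id$ is neutral for $*$, we have $r \weih r^{[n]} \weihEq \lpo$, so it suffices to consider $r$ in the principal ideal below $\lpo$. I would then split into cases according to the Weihrauch degree of $r$.

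The two extreme cases are immediate. If $r \weihEq \id$, then $r^{[n]} \weihEq \id \weihStrict \lpo$, a contradiction. If $r \weihEq \lpo$, then $r^{[n]} \weihEq \lpo^{[n]}$, and the known non-idempotence of $\lpo$ under the compositional product (giving $\lpo \weihStrict \lpo^{[k]}$ for every $k \geq 2$) again yields a contradiction.

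The main obstacle is the intermediate case $\id \weihStrict r \weihStrict \lpo$. Here the plan is to exploit the concrete form of $\lpo$'s discontinuity in order to iteratively shorten the compositional product. Consider the computable instances $\mathbf{e}_i \in \nn$ equal to $\one$ except for a single $\zero$ at position $i$, so that $\lpo(\mathbf{e}_i) = \zero$, $\lpo(\one) = \one$, and $\mathbf{e}_i \to \one$ in Baire space. Fix realizers $(k_0, \dots, k_{n-1}, h)$ witnessing $\lpo \weih r^{[n]}$. A continuity-plus-pigeonhole argument over $i$ should isolate a stage $j^* < n$ at which the $r$-call first \textquotedblleft sees\textquotedblright\ the zero at position $i$ and whose response already carries the discriminating bit between the two $\lpo$-answers. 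Collapsing the first $j^*+1$ steps into a single reduction then gives $\lpo \weih r^{[j^*+1]}$; together with $r^{[j^*+1]} \weih r^{[n]} \weihEq \lpo$ this forces $r^{[j^*+1]} \weihEq \lpo$, a strictly shorter root equation. Induction on $n$ then drives the exponent down to $1$, yielding $r \weihEq \lpo$ and contradicting $r \weihStrict \lpo$.

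The technical heart of the argument, and the step I expect to be hardest to make rigorous, is the pigeonhole-continuity step: each $r$-call's instance depends on the preceding $r$-responses, so one must carefully control how a problem strictly below $\lpo$ can propagate discontinuities through nested calls without \textquotedblleft spreading\textquotedblright\ the sensitive stage across all $n$ coordinates. For $\llpo$ the same blueprint applies, with $\mathbf{e}_i$ replaced by instances that vanish except at one non-zero position, and the parity of that position playing the role of $\lpo$'s discriminating bit (using crucially that $\llpo(\zero) = \{\zero, \one\}$ admits both answers on the limit instance).
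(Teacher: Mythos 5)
Your two boundary cases are fine (given the well-known fact $\lpo \weihStrict \lpo * \lpo$), but the intermediate case $\id \weihStrict r \weihStrict \lpo$ --- which is the entire content of the theorem --- is left as a hope rather than an argument, and as sketched it has two concrete problems. First, the pigeonhole-plus-continuity collapse is unsubstantiated: nothing forces the existence of a single stage $j^*$ whose $r$-answer already ``carries the discriminating bit'', and even if such a stage exists, nothing forces $j^*+1 < n$, which you need for the induction to make progress (the last $r$-call may well be the one that decides between $\zero$ and $\one$). Note also that $r^{[n]}$ is a compositional product, not a literal $n$-fold composition of $r$ with computable glue $k_0,\dots,k_{n-1},h$; before any stage-by-stage analysis you must replace it by $f_1 \circ \dots \circ f_n$ with $f_i \weih r$, which further complicates tracking where the zero is ``seen''. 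Second, and more fundamentally, the data your argument would extract (the stage $j^*$, the discriminating bit, the subsequence of indices $i$ along which the realizers stabilize) comes from an infinite pigeonhole and is not computable from anything; so at best the collapse yields \emph{continuous} realizers, i.e.\ $\lpo \cont r^{[j^*+1]}$ and ultimately $\lpo \cont r$, which does not contradict the case hypothesis $r \weihStrict \lpo$, a statement about computable reductions.

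This is exactly the point where the paper changes framework: it works with continuous Weihrauch reducibility throughout, shows that $1$-weak continuity is a degree invariant preserved under composition (Lemma~\ref{lem:closure}), deduces that any root $r$ of $\lpo$ must be $1$-weakly discontinuous because $\lpo$ is (Lemma~\ref{lem:discontinuous}), and then builds from this a continuous reduction $\lpo \cont r$ (Lemma~\ref{lem:program_lpo}) --- essentially your collapse intuition carried out once, with a non-computable oracle list absorbing precisely the non-effective data. It then concludes $\lpo * \lpo \cont r * r \weih r^{[n]} \weihEq \lpo$, contradicting Lemma~\ref{lem:lpo2_stronger_than_lpo}; this last step also needs the monotonicity of $*$ under $\cont$ (Lemma~\ref{lem:combine_cont}), which is not automatic. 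To salvage your plan you would need (i) an actual proof of the collapse step, and (ii) either a computable extraction of the collapse data (unlikely) or the same passage to continuous reducibility, at which point you would also need $\lpo \contStrict \lpo * \lpo$ and the compatibility of $\cont$ with $*$ --- in other words, essentially the paper's lemmas. The same remarks apply verbatim to your $\llpo$ sketch.
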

Finally, there are problems that only have some roots:
\begin{theorem}\label{thm:some_roots}
	For any $n \geq 2$, there is a problem that has an $n$-th root but no $(n+1)$-th root.
\end{theorem}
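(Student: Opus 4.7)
My plan is to take $f := r^{[n]}$, where $r$ is chosen from the uncountable family produced by Theorem~\ref{thm:uncount_no_root}. Then $f$ has the $n$-th root $r$ by definition, so only the second half---that $f$ admits no $(n+1)$-th root---needs work.

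Suppose toward contradiction that $s^{[n+1]} \weihEq r^{[n]}$. Then $s \weih s^{[n+1]} \weihEq r^{[n]}$, which places $s$ in the initial segment of Weihrauch degrees below $r^{[n]}$. The key extra ingredient I would draw from the proof of Theorem~\ref{thm:uncount_no_root} is a rigidity statement: each $r$ produced there should satisfy (either outright, or after a mild strengthening of the underlying diagonalization) that every Weihrauch degree below $r^{[n]}$ is already a compositional power $r^{[j]}$ with $0 \leq j \leq n$, and that the chain $\id \weihStrict r \weihStrict r^{[2]} \weihStrict \cdots$ is strictly increasing. Granted this, $s \weihEq r^{[j]}$ for some such $j$; substituting back into $s^{[n+1]} \weihEq r^{[n]}$ gives $r^{[j(n+1)]} \weihEq r^{[n]}$, and strict monotonicity forces $j(n+1) = n$, which has no nonnegative-integer solution.

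The main obstacle is therefore to secure the rigidity statement, which is stronger than ``$r$ has no $m$-th root for any $m \geq 2$.'' In Theorem~\ref{thm:uncount_no_root} the construction presumably proceeds by diagonalizing against an enumeration of potential reductions; one extends the list of requirements to block every candidate non-power degree sitting below $r^{[n]}$, as well as any equality $r^{[k]} \weihEq r^{[k+1]}$ that would collapse the chain. The cardinality argument behind Theorem~\ref{thm:uncount_no_root} should still leave continuum-many witnesses $r$ satisfying these strengthened requirements, and any single such $r$ yields the desired $f$. If the rigidity route cannot be carried out cleanly, a fallback would be to search for a multiplicative invariant on Weihrauch degrees---perhaps extracted from the proof of Theorem~\ref{thm:lpo_no_root}---for which $(n+1)\ell(s) = n\ell(r)$ together with $0 < \ell(s) \leq \ell(r)$ is combinatorially impossible, which is the arithmetic heart of why $n$ and $n+1$ are coprime and unequal.
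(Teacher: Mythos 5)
Your argument does not go through as written: everything rests on the ``rigidity'' claim that every Weihrauch degree below $r^{[n]}$ is one of the powers $r^{[j]}$ with $0 \leq j \leq n$, and this is neither proved nor obtainable from Theorem~\ref{thm:uncount_no_root}. That theorem is not established by a diagonalization one could ``mildly strengthen'' with extra requirements; its witnesses are the explicit problems $w_a$ built from Turing degrees, and nothing in that construction controls the cone below $w_a^{[n]}$. In fact the rigidity claim already fails one level down: let $N$ be the total problem with $N(x) := \{y \in \nn \mid \text{$y$ is non-computable}\}$ for every $x$. Then $N \weih w_a$ (take $k := \id$ and project onto the solution, since every $w_a$-solution is non-computable), $N \not\weih \id$ (a computable reduction would compute a non-computable sequence from a computable input), and $w_a \not\weih N$ (choose an input $x$ whose Turing degree is incomparable with $a$; since $x$ itself is a valid $N$-solution, the outer reduction would have to compute from $x$ alone a sequence of degree exactly $a$). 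Hence $\id \weihStrict N \weihStrict w_a$, so the initial segment below $w_a^{[n]}$ is not the chain of powers, and the arithmetic step ``$s \weihEq r^{[j]}$, hence $j(n+1) = n$'' has no basis. Note also that the coprimality of $n$ and $n+1$ is not the heart of the matter: one must exclude \emph{all} candidate roots $s$, including those incomparable with every power of $r$; your fallback ``multiplicative invariant'' is a suggestion, not an argument.

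The paper's proof uses a different witness and the continuity machinery of Section~3: take $f := \lpo^{[n]}$, which obviously has the $n$-th root $\lpo$. Theorem~\ref{thm:lpo_n_strict_lpo_n_1} (proved via $k$-continuity, Corollary~\ref{cor:lpo_n_finitely_continuous} and Lemma~\ref{lem:lpo_n_not_n_continuous}) yields $\lpo \cont \lpo^{[n]} \contStrict \lpo^{[n+1]}$, so Theorem~\ref{thm:no_mth_root}, applied with its parameter equal to $n+1$, excludes $m$-th roots of $f$ for all $m \geq n+1$: any such root would be $1$-weakly discontinuous, hence continuously above $\lpo$ by Lemma~\ref{lem:program_lpo}, forcing $\lpo^{[n+1]} \cont \lpo^{[n]}$, a contradiction. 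Something of this quantitative flavour (a property that bounds how many copies of $\lpo$ can be composed) is what your ``invariant'' would have to be; if you want to salvage a $w_a$-based witness instead, you would need a structure theorem about the cone below $w_a^{[n]}$ that, as shown above, is false as stated.
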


\subsection*{Acknowledgements}
I would like to thank Arno Pauly and Giovanni Sold\`{a} for our correspondence. Also, I would like to thank Nicholas Pischke without whom some of these results would still be in some drawer.

\section{Using Turing degrees}

In this section, we prove Theorem \ref{thm:uncount_no_root} using the following family of problems:
\begin{definition}
	Let $a$ be a non-zero Turing degree. We define $w_a: \nn \rightrightarrows \nn$ with
	\begin{equation*}
		w_a(x) :=
		\begin{cases}
			\{y \in \nn \mid \text{$y$ is non-computable}\} & \text{ if $x$ is computable,}\\
			\{y \in \nn \mid \text{$y$ has Turing degree $a$}\} & \text{ if $x$ is non-computable.}
		\end{cases}
	\end{equation*}
\end{definition}
Before we state and prove all required lemmas, let us give a short sketch of the proof idea: Consider $w_a$ for some non-zero Turing degree $a$ and assume that $r$ is one of its roots. First, we notice that $w_a$ has a computable instance such that all of its solutions are non-computable. (Obviously, this holds for any instance of $w_a$.) We show that this property must also hold for $r$. Then, we consider $w_a \circ r$ and easily conclude that this problem has a computable instance such that all of its solutions have a degree equal to $a$. By $w_a \circ r \weih w_a * r \weih r * w_a$, we conclude that $r * w_a$ must have a computable instance such that all of its solutions have a degree greater or equal to $a$. From this, we can extract some computable function $e$ such that all solutions of $(r \circ e)(x)$ have a degree greater or equal to $a$ for any non-computable $x$. We conclude that $r \circ e \circ r$ has a computable instance such that all of its solutions have a degree greater or equal to $a$. Finally, with $r \circ e \circ r \weih r * \id * r \weih w_a$, we conclude that the same must hold for $w_a$. This clearly contradicts the definition of~$w_a$.

When working with compositional products, we have to be extra careful because of the linear nature of Weihrauch reducibility (cf.~\cite[Section~11.9.1]{BGP}). For example, there are problems $f$ and $g$ such that we cannot reduce $f$ to $f * g$ or $g * f$: Take $f := \id$ and let $g$ be the problem with empty domain.

In the following, we are looking at roots $r$ of problems with a domain that contains computable instances. Thus, $r$ must also contain some computable element in its domain. We conclude $\id \weih r$. Now, given any problem $f$, we can always derive $f \weih f * \id \weih f * r$ and $f \weih \id * f \weih r * f$. In particular, we have $r * r \weih r^{[n]}$ for $n \geq 2$. We will apply this observation implicitly.

The following first step will be used to show that our family contains uncountably many incomparable problems.
\begin{lemma}\label{lem:incomparable}
	Let $a$ and $b$ be two incomparable Turing degrees. Then, the problems $w_a$ and $w_b$ are also incomparable.
\end{lemma}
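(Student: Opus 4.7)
The plan is to establish that both $w_a \weih w_b$ fails and $w_b \weih w_a$ fails; since the situation is symmetric in $a$ and $b$, it suffices to rule out $w_a \weih w_b$ and then swap the roles of $a$ and $b$ for the other direction. Suppose for contradiction that computable functions $h$ and $k$ realize $w_a \weih w_b$. The key preparatory step is to fix a specific instance $x \in \nn$ of Turing degree exactly $b$; such an $x$ exists because incomparability of $a$ and $b$ forces $b$ to be non-zero. Since $x$ is non-computable, $w_a(x)$ is the set of sequences of Turing degree $a$, and every output $h(\langle x, y \rangle)$ of the reduction must land in this set.

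Next, I would observe that $k(x)$ is computable from $x$, hence has Turing degree at most $b$, and split into two cases. If $k(x)$ is computable, then $w_b(k(x))$ is the set of all non-computable sequences, so I may deliberately choose a solution $y$ of Turing degree $b$ (which is admissible because $b \neq 0$). If $k(x)$ is non-computable, then $w_b(k(x))$ is already the set of sequences of degree exactly $b$, and any $y$ in it suffices. In either case, $\langle x, y \rangle$ has Turing degree at most $b$, so $h(\langle x, y \rangle)$, being computable from $\langle x, y \rangle$, has degree at most $b$ as well. But this output must be a solution of $w_a$ on $x$ and therefore have degree exactly $a$, contradicting $a \not\leq b$.

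I do not foresee any significant obstacle here: the argument is a direct manipulation of the two defining cases of $w_a$ and $w_b$. The crucial design choice is picking the instance $x$ to have degree precisely $b$ (rather than some other non-computable degree), since this is exactly what confines both branches of the case split to the cone below $b$ and forces the forbidden inequality $a \leq b$. The symmetric version of the argument, obtained by exchanging the roles of $a$ and $b$ throughout, rules out $w_b \weih w_a$ and thus establishes incomparability.
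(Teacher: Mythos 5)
Your argument is correct and follows essentially the same route as the paper: fix an instance $x$ of degree exactly $b$, feed a degree-$b$ solution of $w_b(k(x))$ back through $h$, and conclude the forbidden reduction $a \leq_{\mathrm{T}} b$. The only cosmetic difference is that the paper avoids your case split on whether $k(x)$ is computable by noting that $x$ itself is always a valid solution of $w_b(k(x))$, so it simply uses $h(\langle x, x\rangle)$.
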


\begin{proof}
	We prove that $w_a \weih w_b$ implies that $a$ is Turing reducible to $b$ for two non-zero Turing degrees $a$ and $b$.
	Assume that $h, k:\subseteq \nn \to \nn$ realize the Weihrauch reduction $w_a \weih w_b$. Let $x \in \nn$ have degree $b$. Then, $x$ itself is a solution for the instance $k(x)$ of $w_b$. Thus, $h(\langle x, x \rangle)$ is a solution for the instance $x$ of $w_a$. Since this solution is Turing reducible to $b$, our claim follows from the fact that any solution for the instance $x$ of $w_a$ has degree $a$.
\end{proof}
Next, we show that having a computable instance that only has non-computable solutions (or only solutions above a certain degree) is transferred to higher Weihrauch degrees.
\begin{lemma}\label{lem:cmp_inst_deg_sol}
	Let $a$ be some Turing degree and let $f$ and $g$ be problems with $f \weih g$. If $f$ has a computable instance such that the degree $b$ of any solution satisfies $b \geq a$ (or $b > a$), then $g$ also has a computable instance with the same property.
\end{lemma}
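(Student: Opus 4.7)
The plan is to unwind the definition of Weihrauch reducibility and push a witnessing computable instance of $f$ through the forward realizer. Fix partial computable $h, k:\subseteq \nn \to \nn$ realizing $f \weih g$, and let $x_0$ be a computable instance of $f$ all of whose solutions have Turing degree at least $a$ (the strict case will be handled identically). I propose $k(x_0)$ as the desired computable instance of $g$.

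First I would verify that $k(x_0)$ is indeed a computable instance of $g$: since $x_0 \in \dom(f)$ implies $x_0 \in \dom(k)$, and $k$ is partial computable, computability of the input is preserved, while $k(x_0) \in \dom(g)$ follows from the reduction. Then for any $y \in g(k(x_0))$, the reduction guarantees $h(\langle x_0, y \rangle) \in f(x_0)$, so by hypothesis this element has Turing degree at least $a$. Because $h$ is partial computable and $x_0$ is computable, $h(\langle x_0, y \rangle)$ is in turn computable from $y$ alone, hence its Turing degree is at most $\dg(y)$. Chaining these two bounds gives $\dg(y) \geq a$, as desired.

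There is no genuine obstacle here; the only point worth flagging is the essential role of $x_0$ being computable, which is precisely what lets us conclude $h(\langle x_0, y \rangle) \leq_T y$ rather than merely reducibility to the join of $x_0$ and $y$. Without computability of $x_0$, the lower bound on degrees would not transfer from the $f$-solution to $y$ itself, and the argument would fail. The strict variant $b > a$ requires no change, since the one inequality that matters (the hypothesis on degrees of $f$-solutions) simply becomes strict.
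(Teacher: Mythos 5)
Your proposal is correct and matches the paper's proof: both push the computable witness $x_0$ through $k$, observe that any $g$-solution $y$ yields the $f$-solution $h(\langle x_0, y\rangle)$ of degree at least (respectively, above) $a$, and use computability of $h$ and $x_0$ to conclude that this degree is Turing reducible to that of $y$. Your remark on where computability of $x_0$ is essential is exactly the point the paper's argument relies on.
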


\begin{proof}
	Let $x \in \nn$ be a computable instance of $f$ such that all solutions in $f(x)$ have a degree $b$ with $b \geq a$ (or $b > a$). Moreover, let $h, k:\subseteq \nn \to \nn$ realize the Weihrauch reduction $f \weih g$. Given an arbitrary solution $y$ of $g(k(x))$, we know that the degree $b$ of the solution $h(\langle x, y \rangle)$ must satisfy $b \geq a$ (or $b > a$). Thus, $b$ is Turing reducible to the degree $c$ of $y$. Finally, we conclude that the degree $c$ of any solution $y$ for the computable instance $k(x)$ of $g$ must satisfy $c \geq a$ (or $c > a$).
\end{proof}
Now, we see that having some computable instance that only has non-computable solutions is in a certain sense \emph{atomic}, i.e., roots inherit this property.
\begin{lemma}\label{lem:r_cmp_inst_ncmp_sol}
	Let $a$ be some non-zero Turing degree and let $n \in \n$ with $n \geq 2$ be such that $r$ is an $n$-th root of $w_a$. Then, $r$ has a computable instance whose solutions are all non-computable.
\end{lemma}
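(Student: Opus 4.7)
The plan is to argue by contradiction: assuming that every computable instance of $r$ admits at least one computable solution, I will show that the same closure property passes up to $r^{[n]}$, which will contradict a consequence of $w_a \weih r^{[n]}$ together with Lemma~\ref{lem:cmp_inst_deg_sol}.

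For the easy direction, $w_a$ trivially has a computable instance with only non-computable solutions: for any computable $x \in \nn$, every element of $w_a(x)$ has Turing degree $a$, which is non-zero. Applying the strict version of Lemma~\ref{lem:cmp_inst_deg_sol} (with the zero Turing degree as threshold) to $w_a \weih r^{[n]}$ produces a computable instance $u$ of $r^{[n]}$ all of whose solutions are non-computable.

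For the other direction, I would work with a concrete realization of the compositional product. By the standard structure theorem for $*$ (cf.~\cite[Corollary~3.7]{BP18}), $r^{[n]}$ is Weihrauch-equivalent to a problem whose instances encode tuples $\langle z_1, \phi_1, \ldots, \phi_{n-1} \rangle$, where $z_1 \in \dom(r)$ and each $\phi_i$ codes a partial computable function, and whose solutions are obtained uniformly computably from any sequence $(y_1, \ldots, y_n)$ satisfying $y_1 \in r(z_1)$ and, for $i \geq 1$, $y_{i+1} \in r(z_{i+1})$ with $z_{i+1} := \phi_i(\langle z_1, y_1, \ldots, z_i, y_i\rangle)$. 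Under the contradiction hypothesis, starting from the computable $u$ (so computable $z_1, \phi_1, \ldots, \phi_{n-1}$), pick a computable $y_1 \in r(z_1)$; this renders $z_2$ computable, so one can pick a computable $y_2$, and iterating yields a computable solution of $u$, contradicting the previous paragraph.

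The main obstacle is the bookkeeping for the canonical form of $r^{[n]}$: one needs a representative in which ``computable instance'' genuinely corresponds to ``computable $z_1$ together with computable $\phi_i$'', and in which the assembly of a solution from $(y_1, \ldots, y_n)$ is uniformly computable, so that the iterative selection of computable $y_i$'s actually produces a computable solution of the original instance $u$. Once this representation is fixed (the construction used to prove the existence of the compositional product provides exactly such a form), the remainder of the argument is a straightforward induction on $n$.
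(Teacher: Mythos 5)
Your proof is correct in substance, but it takes a genuinely different route from the paper. The paper never opens up the compositional product: it argues by induction on $k$ that, under your contradiction hypothesis, every computable instance of $r^{[k]}$ has a computable solution --- in the induction step it picks representatives $f \weih r$ and $g \weih r^{[k]}$ with $f \circ g$ of degree $r^{[k+1]}$, transfers the hypothesis to $f$ and $g$ via Lemma~\ref{lem:cmp_inst_deg_sol} (applied with the zero degree, in contrapositive form: the property ``every computable instance has a computable solution'' passes downward along $\weih$), composes the two computable solutions, and transfers the result back along $r^{[k+1]} \weih f \circ g$; the contradiction with $w_a \weihEq r^{[n]}$ is then immediate. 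You instead unfold an explicit canonical representative of $r^{[n]}$ from the existence proof of $*$ and select computable answers stage by stage. That works and makes the mechanism very concrete, but it carries exactly the bookkeeping burden you acknowledge, plus one point you should state more carefully: in the canonical representative the glue maps $\phi_i$ are codes of \emph{continuous} partial functions (i.e., functions computable relative to the code), not of partial computable functions --- if the glue ranged over computable functions only, the upper-bound direction of the max-characterization would fail, since the glue extracted from an instance $x$ of some $f' \circ g'$ is in general only computable relative to $x$. Your argument survives this correction, because all you actually use is that a computable instance has computable components and that a computable code yields a glue map carrying computable inputs to computable outputs. The paper's induction buys independence from any such representation-theoretic detail, needing only the defining maximum property of $*$ together with Lemma~\ref{lem:cmp_inst_deg_sol}; your version buys a one-shot, hands-on construction of the computable solution at the price of fixing and verifying the canonical form.
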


\begin{proof}
	For contradiction, assume that all computable instances of $r$ have a computable solution.
	Using this assumption and Lemma \ref{lem:cmp_inst_deg_sol} (applied to the degree $a := 0$), we show that all computable instances of $r^{[k]}$ have a computable solution for $k \in \n$ with $k \geq 1$, by induction: For $k := 1$, this holds by assumption. Now, assume that our claim has already been shown for $k \geq 1$. Let $f \weih r$ and $g \weih r^{[k]}$ be such that $f \circ g$ has Weihrauch degree $r^{[k+1]}$. Let $x \in \nn$ be a computable instance of $f \circ g$. By induction hypothesis, $g(x)$ has a computable solution $y \in \nn$. By assumption on $r$, $f(y)$ has a computable solution $z \in \nn$. We conclude that $(f \circ g)(x)$ has a computable solution $z$. Applying Lemma \ref{lem:cmp_inst_deg_sol} to the reduction $r^{[k+1]} \weih f \circ g$ yields our claim that all computable instances of $r^{[k+1]}$ have a computable solution. However, this is clearly false for $r^{[n]} \weihEq w_a$.
\end{proof}
\begin{lemma}\label{lem:omit_computable}
	Let $f$ be a problem and $h,k:\subseteq \nn \to \nn$ two computable functions. Then, we have the reduction $h \circ f \circ k \weih f$.
\end{lemma}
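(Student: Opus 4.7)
The plan is to give the reduction directly by exhibiting the two computable realizers demanded by the definition of Weihrauch reducibility. Given the definition of composition from earlier in the paper, an instance $x$ of $h \circ f \circ k$ is an $x \in \dom(k)$ such that $k(x) \in \dom(f)$ and $f(k(x)) \subseteq \dom(h)$.

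For the ``instance transformer'' of the Weihrauch reduction, I would simply take $k$ itself. Since $k(x) \in \dom(f)$, this produces a legitimate instance of $f$. For the ``solution transformer,'' given $x$ and any $y \in f(k(x))$, I would return $h(y)$. This is well-defined because $y \in f(k(x)) \subseteq \dom(h)$, and by construction
\begin{equation*}
    h(y) \in h(f(k(x))) = (h \circ f \circ k)(x)\comma
\end{equation*}
so $h(y)$ is indeed a solution to the original instance $x$. Both maps are partial computable as compositions of partial computable functions (the solution transformer ignores the first component of its input and applies $h$ to the second).

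There is no real obstacle here: the lemma merely formalizes the intuition that computable pre- and post-processing can always be absorbed into the realizers of a Weihrauch reduction. The only point to keep an eye on is the bookkeeping of the composition's domain, ensuring that each intermediate value we feed into $f$ or $h$ actually lies in the respective domain; this is handled automatically by the definition $\dom(f \circ g) = \{x \in \dom(g) \mid g(x) \subseteq \dom(f)\}$ applied twice.
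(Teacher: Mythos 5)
Your proof is correct and follows essentially the same route as the paper: use $k$ as the instance transformer and $\langle x, y\rangle \mapsto h(y)$ as the solution transformer, with the domain conditions guaranteed by the definition of composition applied twice. The only (harmless) difference is that you make explicit that the outer realizer discards the input $x$, which the paper leaves implicit.
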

\begin{proof}
	The Weihrauch reduction is realized directly by the computable functions $h$ and $k$: Let $x \in \dom(h \circ f \circ k)$. By definition of the domain of compositions, $k(x)$ is defined and an instance of $f$. Now, let $y$ be an arbitrary solution for the instance $k(x)$ of $f$. Again, by definition of the domain of compositions, $y$ must be in the domain of $h$. Finally, $h(y) \in (h \circ f \circ k)(x)$ clearly is a solution for the instance $x$ of $h \circ f \circ k$.
\end{proof}
Finally, we combine everything in order to prove Theorem \ref{thm:uncount_no_root}.
\begin{proof}[Proof of Theorem \ref{thm:uncount_no_root}]
	We show that for any Turing degree $a$, the problem $w_a$ has no $n$-th root for $n \in \n$ with $n \geq 2$. Then, our claim follows from Lemma \ref{lem:incomparable} and the well-known fact that there are uncountably many pairwise incomparable Turing degrees (cf.~\cite{Shoenfield}).
	
	Assume that $r$ is an $n$-th root of $w_a$ for some non-zero Turing degree $a$. Let $f \weih r$ and $g \weih w_a$ be such that $f \circ g$ has Weihrauch degree $r * w_a$. We have the chain of reductions $w_a \circ r \weih w_a * r \weih r * w_a \weih f \circ g$. From Lemma \ref{lem:r_cmp_inst_ncmp_sol}, we know that $r$ has a computable instance such that all of its solutions are incomputable. Thus, $w_a \circ r$ has a computable instance such that all of its solutions have a degree that is greater or equal to $a$. Via Lemma \ref{lem:cmp_inst_deg_sol}, this property is transferred to $f \circ g$. We define $c \in \nn$ to be such a computable instance of $f \circ g$.
	
	Let $h, k: \subseteq \nn \to \nn$ realize the Weihrauch reduction $g \weih w_a$. Since $k(c)$ is computable, every non-computable $x \in \nn$ (which is a solution of $w_a(k(c))$) can be converted into a solution $h(\langle c, x \rangle)$ of $g(c)$. Let us write this process in form of a computable function: Let $e: \subseteq \nn \to \nn$ be the computable map defined by $e(x) := h(\langle c, x \rangle)$ for any non-computable $x \in \nn$.\footnote{Since $e$ is computable and defined for all non-computable sequences, its domain can actually be extended to $\nn$.} We have that $e(x)$ is a solution of $g(c)$ for any non-computable $x$.
	
	Recall that $r$ has a computable instance such that all its solutions are non-computable.
	Thus, $f \circ e \circ r$ has the same instance (since the range of $e$ is a subset of $g(c) \subseteq \dom(f)$) and all of its solutions have a degree that is greater or equal to $a$. Recall $f \weih r$. Also, we have $e \circ r \weih r$ simply by Lemma \ref{lem:omit_computable}. By definition of $r * r$, this entails $f \circ (e \circ r) \weih r * r \weih r^{[n]} \weih w_a$. By Lemma~\ref{lem:cmp_inst_deg_sol}, $w_a$ must have a computable instance that only has solutions of a degree greater or equal to $a$. However, inspecting the definition of $w_a$ reveals that this is not the case.
\end{proof}

\section{Using continuity}

In this section, we prove Theorem \ref{thm:lpo_no_root}.
While the arguments of the previous section relied on Turing degrees, our next proofs use continuity. Our arguments will make use of so-called \emph{continuous} Weihrauch reducibility. This is defined like regular Weihrauch reducibility but now the functions $h, k$ that realize the reducibility only have to be continuous and not necessarily computable. For problems $f$ and $g$, we write $f \cont g$ if and only if $f$ continuously Weihrauch reduces to $g$. Similarly, we write $f \contEq g$ if and only if both $f \cont g$ and $g \cont f$ hold. Taking the quotient of the structure of problems with respect to $\contEq$ leads to the \emph{continuous Weihrauch degrees}. Finally, we write $f \contStrict g$ if and only if both $f \cont g$ and $f \contNeq g$ hold.

Let us, again, give a short sketch of the proof idea for $\lpo$ (for $\llpo$ it is quite similar): First, we introduce a notion of weak continuity that is preserved by composing problems and that is transferred to lower Weihrauch problems. Then, we show that $\lpo$ is weakly \emph{discontinuous}. Therefore, any root $r$ of $\lpo$ must also be weakly discontinuous. We prove that $\lpo$ \emph{continuously} Weihrauch reduces to any weakly discontinuous problem, in particular, to $r$. Thus, we conclude ${\lpo * \lpo \cont \lpo}$, a statement whose falsity is well-known.

\begin{definition}
	Given a number sequence $x \in \nn$ and a number $n \in \n$, let us write $\restr{x}{n}$ for the initial segment of $x$ of length $n$.
	
	Let $f:\subseteq \nn \rightrightarrows \nn$ be a partial multivalued function and let $k \in \n$ be positive. We say that $f$ is \emph{$k$-weakly continuous} if and only if for any element $x \in \dom(f)$ and sequence $(y_n)_{n \in \n} \subseteq \dom(f)$ with $\lim_{n \to \infty} y_n = x$, there exists a solution $u \in f(x)$ such that for any $l < k$ and $m \in \n$, we can find $n \geq m$ together with a solution $v \in f(y_{n \cdot k + l})$ with $\restr{u}{m} = \restr{v}{m}$.
	We say that $f$ is \emph{$k$-weakly discontinuous} if it is not $k$-weakly continuous.
\end{definition}

\begin{lemma}\label{lem:discontinuous}\mbox{}
	\begin{enumerate}[label=(\roman*)]
		\item $\lpo$ is $1$-weakly discontinuous.
		\item $\llpo$ is $2$-weakly discontinuous.
	\end{enumerate}
\end{lemma}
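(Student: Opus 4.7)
The plan is to construct, for each of the two items, a single explicit convergent sequence in the domain that forces the defining condition for $k$-weak continuity to fail.

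For (i), I would take $x := \one$ and let $y_n \in \nn$ be the sequence obtained from $\one$ by flipping position $n$ to $0$. Then $\restr{y_n}{m} = \restr{\one}{m}$ for all $m \leq n$, so $y_n \to \one$; also $\lpo(\one) = \{\one\}$ and $\lpo(y_n) = \{\zero\}$. Hence the only possible witness $u$ in the $1$-weak continuity clause is $u = \one$, and already at $m = 1$ the only available solution $v \in \lpo(y_n) = \{\zero\}$ satisfies $\restr{v}{1} = (0) \neq (1) = \restr{u}{1}$ for every $n$, so no suitable $n$ exists.

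For (ii), I would take $x := \zero$ and let $y_n$ be the sequence that is $1$ at position $n$ and $0$ everywhere else. Each $y_n$ has at most one nonzero entry, so $y_n \in \dom(\llpo)$, and $y_n \to \zero$. The key point is a parity split: $\llpo(y_{2n}) = \{\one\}$, because the unique nonzero entry of $y_{2n}$ sits at the even index $2n$ so $y_{2n}$ is zero at all odd indices; symmetrically $\llpo(y_{2n+1}) = \{\zero\}$. A witness $u \in \llpo(\zero) = \{\zero, \one\}$ for $2$-weak continuity would then have to satisfy $\restr{u}{m} = \restr{\one}{m}$ (from the $l = 0$ subsequence) and $\restr{u}{m} = \restr{\zero}{m}$ (from the $l = 1$ subsequence) for every $m$, which is impossible as soon as $m \geq 1$.

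I do not expect any genuine obstacle: both arguments reduce to unwinding the definition against these hand-picked sequences, and the ``at most one nonzero entry'' condition in $\dom(\llpo)$ as well as convergence in the product topology on $\nn$ are immediate from the constructions. The only care needed is organizational, namely to state the witness $u$ explicitly in each case and observe that the small set of admissible solutions leaves no room for the required initial-segment matching.
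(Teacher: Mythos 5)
Your proposal is correct and uses exactly the paper's witnesses ($x=\one$ with $y_n$ flipping position $n$ for $\lpo$; $x=\zero$ with $y_n$ having a single $1$ at position $n$ for $\llpo$) and the same parity analysis of $\llpo(y_{2n})=\{\one\}$, $\llpo(y_{2n+1})=\{\zero\}$. The only cosmetic difference is that the paper picks one failing $l$ for each candidate $u$, while you derive the contradiction from both subsequences at once; this is the same argument.
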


\begin{proof}
	For $\lpo$, choose $x := \one$ and let $y_n$ be the sequence with $(y_n)_n = 0$ and $(y_n)_m = 1$ for all $m \neq n$, for all $n \in \n$. Clearly, $(y_n)_{n \in \n}$ converges to $x$. Now, for any solution $u \in f(x)$, i.e.~$u = \one$, there exists $m := 1$ such that for all $n \geq m = 1$ and all solutions $v \in f(y_n)$, i.e.~$v = \zero$, we have $\restr{u}{m} = \restr{\one}{m} \neq \restr{\zero}{m} = \restr{v}{m}$.
	
	For $\llpo$, choose $x := \zero$ and let $y_n$ be the sequence with $(y_n)_n = 1$ and ${(y_n)_m = 0}$ for all $m \neq n$, for all $n \in \n$. Similar to before, $(y_n)_{n \in \n}$ converges to $x$. Notice that for every \emph{even} $n$, there is a $0$ at every \emph{odd} position in $y_n$, and for every \emph{odd} $n$, there is a $0$ at every \emph{even} position in $y_n$. This entails $f(y_n) = \{\one\}$ for even $n$ and $f(y_n) = \{\zero\}$ for odd $n$.
	Now, for any solution $u \in f(x)$, i.e.~$u = \zero$ (or $u = \one)$, there exist $l := 0$ (or $l := 1$) and $m := 1$ such that for all $n \geq m = 1$ and all solutions $v \in f(y_{n \cdot k + l})$, i.e.~$v = \one$ (or $v = \zero)$, we have $\restr{u}{m} = \restr{\zero}{m} \neq \restr{\one}{m} = \restr{v}{m}$ (or $\restr{u}{m} = \restr{\one}{m} \neq \restr{\zero}{m} = \restr{v}{m}$).
\end{proof}

\begin{lemma}\label{lem:closure}
	Let $f, g:\subseteq \nn \rightrightarrows \nn$ be two partial multivalued functions and let $k \in \n$ be a positive number.
	\begin{enumerate}[label=(\roman*)]
		\item If $f \cont g$ holds and $g$ is $k$-weakly continuous, then so is $f$.
		\item If both $f$ and $g$ are $k$-weakly continuous, then so is $f \circ g$.
	\end{enumerate}
\end{lemma}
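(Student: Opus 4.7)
The plan is to prove both parts by directly unfolding the definition of $k$-weak continuity, with part (i) reducing to a routine tracking of the continuous realizers and part (ii) requiring a careful interleaving construction to build a single convergent sequence in $\dom(f)$.

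For part (i), I would fix continuous realizers $H, K$ of the reduction $f \cont g$ and a convergent sequence $y_n \to x$ in $\dom(f)$. Since $K$ is continuous, $(K(y_n))_{n \in \n}$ is a sequence in $\dom(g)$ converging to $K(x) \in \dom(g)$, so the $k$-weak continuity of $g$ produces a solution $u' \in g(K(x))$ witnessing the property for this sequence. Setting $u := H(\langle x, u' \rangle) \in f(x)$, I would verify the condition for $u$ by exploiting continuity of $H$ and of the pairing map: given $m$, choose $M$ large enough that any input agreeing with $\langle x, u' \rangle$ on the first $M$ coordinates forces $H$ to output something agreeing with $u$ on the first $m$ coordinates, and also large enough that $\restr{y_n}{M} = \restr{x}{M}$ for $n \geq M$. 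Applying the $g$-condition with length parameter $M$ (and arbitrary residue $l < k$) yields $n \geq M \geq m$ and $v' \in g(K(y_{nk+l}))$ agreeing with $u'$ on the first $M$ bits, whence $v := H(\langle y_{nk+l}, v' \rangle) \in f(y_{nk+l})$ agrees with $u$ on the first $m$ bits.

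For part (ii), let $x \in \dom(f \circ g)$ and $y_n \to x$ in $\dom(f \circ g)$. Applying $k$-weak continuity of $g$ gives $u_g \in g(x)$ witnessing the corresponding property. The key step is to build a single sequence in $\dom(f)$ converging to $u_g$ for use as input to $f$'s weak continuity. For each residue $l < k$ I would iteratively extract (using the $g$-condition) a strictly increasing sequence of indices $N(n,l) \geq n$ together with solutions $v(n,l) \in g(y_{N(n,l)\cdot k + l})$ such that $\restr{u_g}{n} = \restr{v(n,l)}{n}$; each $v(n,l)$ lies in $\dom(f)$ because $y_{N(n,l)\cdot k + l} \in \dom(f \circ g)$ forces every $g$-solution to lie in $\dom(f)$. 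Interleaving gives a single sequence $z_i := v(\lfloor i / k \rfloor, i \bmod k) \in \dom(f)$ with $z_i \to u_g$, since any $i \geq mk$ has quotient at least $m$ and hence $\restr{z_i}{m} = \restr{u_g}{m}$. Applying $k$-weak continuity of $f$ to $u_g$ and $(z_i)_{i \in \n}$ produces $u \in f(u_g) \subseteq (f \circ g)(x)$, which I claim is the desired witness. Indeed, for a residue $l < k$ and length $m$, the $f$-condition delivers $i \geq m$ and $w \in f(z_{ik+l}) = f(v(i,l))$ with $\restr{u}{m} = \restr{w}{m}$; since $v(i,l) \in g(y_{N(i,l)\cdot k + l})$, we obtain $w \in (f \circ g)(y_{N(i,l)\cdot k + l})$, and the inequality $N(i,l) \geq i \geq m$ provides an index in the correct residue class modulo $k$ with sufficiently large quotient.

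The principal obstacle I expect is exactly this construction in part (ii): the residue structure on the $f$-side and on the $(f \circ g)$-side must be matched compatibly, which is why $z_i := v(\lfloor i / k \rfloor, i \bmod k)$ is the right interleaving; the fact that $i \equiv l \pmod{k}$ on the $f$-side forces $N(i,l)\cdot k + l \equiv l \pmod{k}$ on the composition side. Ensuring $N(n,l)$ is at least $n$ (and increasing in $n$) is essential so that the quotient control propagates through, and constructing such $N(n,l)$ is where the strength of $g$'s weak continuity is actually used rather than merely a single approximation.
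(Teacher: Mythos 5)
Your proof is correct and takes essentially the same route as the paper: part (i) tracks the continuous realizers and their moduli, and part (ii) constructs exactly the paper's residue-preserving interleaved sequence of $g$-solutions converging to the chosen $g$-solution before applying the $k$-weak continuity of $f$ and using $N(i,l) \geq i \geq m$ on the composition side. One cosmetic repair in (i): a single $M$ with $\restr{y_n}{M} = \restr{x}{M}$ for all $n \geq M$ need not exist (the convergence threshold for agreement up to length $M$ may exceed $M$), so one should separate the continuity modulus $M_1$ of $H$ from the convergence threshold $M_2$ for agreement up to $M_1$, take $M := \max(M_1, M_2, m)$ as the parameter fed to $g$'s weak continuity, and require only agreement up to $M_1$ on the instance side, exactly as the paper does with its $m_1, m_2$.
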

In the presence of this lemma, we see that being $k$-weakly continuous actually is a property that transfers to any other problem of the same equivalence class, i.e., it is a property of the whole (continuous) Weihrauch degree.
\begin{proof}
	For (i), let $h, k:\subseteq \nn \to \nn$ realize the continuous Weihrauch reduction $f \cont g$. In order to avoid naming collisions, let us say that $g$ is $i$-weakly continuous for $i \geq 1$. Given $x \in \dom(f)$ and $(y_n)_{n \in \n} \subseteq \dom(f)$ with $\lim_{n \to \infty} y_n = x$, we use the continuity of $k$, which yields $k(x) \in \dom(g)$ and $(k(y_n))_{n \in \n} \subseteq \dom(g)$ with $\lim_{n \to \infty} k(y_n) = k(x)$. Now, we apply the assumption that $g$ is $i$-weakly continuous. This provides a solution $u \in g(k(x))$ that satisfies the requirements of the continuity for $g$. Using the Weihrauch reducibility, we know that $u' := h(\langle x, u \rangle)$ is a solution of $f(x)$. Let $l < k$ and $m' \in \n$ be arbitrary. Using the continuity of $h$, we know that there must be some $m_1 \in \n$ such that $h$ only uses the first $m_1$-many members of $x$ and $u$ in order to compute $u'$. Moreover, since we have $\lim_{n \to \infty} y_n = x$, we can find $m_2 \in \n$ such that $\restr{y_n}{m_1} = \restr{x}{m_1}$ holds for all $n \geq m_2$. We take the maximum ${m := \max(m_1, m_2)}$. Using the $i$-weakly continuity of $g$, we can find $n \geq m$ together with $v \in g(k(y_{n \cdot i + l}))$ satisfying $\restr{u}{n} = \restr{v}{n}$. Using the Weihrauch reducibility, we find that $v' := h(\langle y_{n \cdot i + l}, v \rangle)$ is a solution for the instance $y_{n \cdot i + l}$ of $f$. We prove $\restr{u'}{m'} = \restr{v'}{m'}$: First, $\restr{y_{n \cdot i + l}}{m_1} = \restr{x}{m_1}$ holds because of $n \cdot i + l \geq n \geq m \geq m_2$. Second, $\restr{u}{m_1} = \restr{v}{m_1}$ holds because of $\restr{u}{n} = \restr{v}{n}$ and $n \geq m_1$. By definition of $m_1$, this entails $\restr{u'}{m'} = \restr{v'}{m'}$.
	
	For (ii), let $x \in \dom(f \circ g)$ and $(y_n)_{n \in \n} \subseteq \dom(f \circ g)$ with $\lim_{n \to \infty} y_n = x$. We use that fact that $g$ is $k$-weakly continuous: Let $u \in g(x)$ be such that for any $l < k$ and $m \in \n$, we can find $n \geq m$ with a solution $v \in g(y_{n \cdot k + l})$ satisfying $\restr{u}{n} = \restr{v}{n}$. From this, we can define a family of indices $(n_{(l, m)})_{(l, m) \in \n \times \n}$ with $n_{(l, m)} \geq m$ for all $m \in \n$ and $l < k$ together with a sequence of elements $(v_i)_{i \in \n}$ with ${v_{m \cdot k + l} \in g(y_{n_{l, m} \cdot k + l})}$ such that $\restr{u}{m} = \restr{v_{m \cdot k + l}}{m}$ holds for all $m \in \n$ and $l < k$. Clearly, $(v_i)_{i \in \n}$ converges to $u$. Thus, we can use the fact that $f$ is $k$-weakly continuous: This yields ${u' \in f(u) \subseteq (f \circ g)(x)}$ such that for any $l < k$ and $m \in \n$, we can find $i \geq m$ together with a solution $v' \in f(v_{i \cdot k + l}) \subseteq (f \circ g)(y_{n_{(l, i)} \cdot k + l})$ with $\restr{u'}{m} = \restr{v'}{m}$. We conclude that for any $l < k$ and $m \in \n$, we can find ${j := n_{(l, i)} \geq i \geq m}$ together with a solution $v' \in (f \circ g)(y_{j \cdot k + l})$ with $\restr{u'}{m} = \restr{v'}{m}$.
\end{proof}

\begin{lemma}\label{lem:program_lpo}
	Let $f$ be a problem that is $1$-weakly discontinuous. Then, we have $\lpo \cont f$.
\end{lemma}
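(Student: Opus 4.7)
My plan is to construct an explicit continuous Weihrauch reduction $\lpo \cont f$, exploiting the witnesses of $1$-weak discontinuity. By hypothesis, I can fix $x \in \dom(f)$ and $(y_n)_{n \in \n} \subseteq \dom(f)$ with $\lim_{n \to \infty} y_n = x$ such that for every $u \in f(x)$ there is some $m_u \in \n$ with $\restr{u}{m_u} \neq \restr{w}{m_u}$ for all $n \geq m_u$ and all $w \in f(y_n)$.

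I will first define the forward map $k$. Given an instance $z$ of $\lpo$, write $j(z) \in \n \cup \{\infty\}$ for the position of the first zero in $z$. For $j(z) = \infty$ put $k(z) := x$; for $j(z) = j < \infty$ put $k(z) := y_{N_j}$, where $N_j$ is the smallest $n \geq j$ satisfying $\restr{y_n}{j} = \restr{x}{j}$ (existence follows from $y_n \to x$). Continuity of $k$ is then straightforward: any $z'$ with $\restr{z'}{p} = \restr{\one}{p}$ forces $k(z')$ to agree with $x$ on the first $p$ positions, since either $k(z') = x$ or $k(z') = y_{N_{j'}}$ with $j' \geq p$, and in the latter case $\restr{y_{N_{j'}}}{j'} = \restr{x}{j'}$ extends $\restr{x}{p}$. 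At inputs with $j(z) < \infty$ continuity is immediate, as $k$ is locally constant there.

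Next, I will construct the backward map $h$. Given $\langle z, v \rangle$ with $v \in f(k(z))$, process $z$ and $v$ stage by stage; at each stage $s$ commit to an answer if one of two conditions holds: either a zero appears in $\restr{z}{s}$ (commit to $\zero$), or $\restr{v}{s}$ fails to equal $\restr{w}{s}$ for every $w \in f(y_n)$ with $n \geq s$ (commit to $\one$). Once committed, $h$ outputs the appropriate constant sequence. If $z = \one$, then $v \in f(x)$ and the discontinuity witness $m_v$ makes the second condition fire at stage $m_v$, so $h$ outputs $\one$; the first condition never fires since $z$ has no zero. If $z$ has first zero at $j_0$, then $v \in f(y_{N_{j_0}})$ with $N_{j_0} \geq j_0$, so for every stage $s \leq j_0$ the pair $w := v$, $n := N_{j_0}$ witnesses the second condition's failure; thus no premature commitment to $\one$ occurs, and at stage $j_0 + 1$ the first condition triggers correctly. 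The tests defining $h$ depend only on finite prefixes of the input and on the fixed data $f, (y_n)$, so $h$ is continuous (though not generally computable).

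The main difficulty I foresee is balancing the choice of $N_j$: it must guarantee $N_j \geq j$ so that the solution $v \in f(y_{N_j})$ itself witnesses the non-failure of the second condition at all stages $s \leq j$, while simultaneously ensuring that $k$ stays continuous at sequences without zeros. Defining $N_j$ as the smallest $n \geq j$ with $\restr{y_n}{j} = \restr{x}{j}$ handles both requirements in one stroke.
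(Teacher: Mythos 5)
Your construction is correct and is essentially the paper's own argument: the forward map sends zero-free instances to $x$ and instances with first zero at index $j$ to a member of $(y_n)_{n \in \n}$ whose index is at least $j$, while the backward map races the search for a zero in $z$ against finite prefix certificates supplied by the $1$-weak discontinuity; your re-indexing via $N_j$ and the direct test ``$\restr{v}{s}$ disagrees with every $w \in f(y_n)$ for all $n \geq s$'' replace the paper's coded oracle list of certificates, but they play exactly the same role and yield continuity of $h$ for the same reason (the committing stage depends only on a finite prefix of the input). Two small points to make explicit: the phrase ``$z = \one$'' should read ``$z$ has no zero'' (the argument is unchanged), and the zero-check must be evaluated before the certificate-check at each stage (as your verification tacitly assumes), since in the corner case $N_{j_0} = j_0$ both conditions could hold simultaneously at stage $j_0 + 1$.
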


\begin{proof}
	If $f$ is $1$-weakly discontinuous, then there exist an element $x \in \dom(f)$ and a sequence ${(y_n)_{n \in \n} \subseteq \dom(f)}$ with $\lim_{n \to \infty} y_n = x$ such that for any $u \in f(x)$ there exists some $m \in \n$ such that for all $n \geq m$ any solution $v \in f(y_n)$ satisfies $\restr{u}{m} \neq \restr{v}{m}$.
	
	For the construction of the continuous function $k: \subseteq \nn \to \nn$ that produces instances of $f$, we set
	\begin{equation*}
		k(z) :=
		\begin{cases}
			x & \text{ if $z$ has no zero,}\\
			y_n & \text{ if the first zero in $z$ is at index $n \in \n$.}
		\end{cases}
	\end{equation*}
	Let us quickly check that $k$ is continuous: Let $z \in \nn$ and $n \in \n$ be arbitrary. If $z$ does not contain a zero, then let $m \in \n$ be an index such that $\restr{x}{n} = \restr{y_{m'}}{n}$ holds for all $m' \geq m$. Such an index $m$ exists since $(y_n)_{n \in \n}$ converges to $x$. Now, any $z' \in \nn$ with $\restr{z}{m} = \restr{z'}{m}$ does not have a zero at an index below $n$. Thus, $k(z')$ either maps to $x$ or $y_{m'}$ for some $m' \geq m$. We conclude $\restr{k(z)}{n} = \restr{k(z')}{n}$. Otherwise, if $z$ does contain a zero, let $m \in \n$ be the first such index. Now, any $z'$ with $\restr{z}{(m+1)} = \restr{z'}{(m + 1)}$ also has its first zero at index $m$. We conclude $k(z) = k(z')$.
	
	For the construction of the continuous function $h: \subseteq \nn \to \nn$ that produces solutions of $\lpo$, consider all $u \in f(x)$ such that there exists some $m \in \n$ such that for all $n \geq m$ any solution $v \in f(y_n)$ satisfies $\restr{u}{m} \neq \restr{v}{m}$. For any such $u \in f(x)$ and $m \in \n$, let us collect the finite number sequence $\restr{u}{m}$. Since there can only be at most countably infinitely many such number sequences, we can collect them in a list $L$ that we may code in form of an infinite number sequence. Let us assume that this number sequence is available to us in form of an oracle. We will provide the definition of $h$ in form of a computation that uses this oracle. Consequently, $h$ will be continuous.
	
	Let $z \in \nn$ be an instance of $\lpo$ and let $w \in \nn$ be the solution for $k(z)$ of $f$. The computation works in stages: At stage $s \in \n$, take the next finite number sequence $t$ of length $m$ from $L$. First, we check if there is a zero in $z$ with index below $m$ or $s+1$. If this is the case, we terminate and return $\zero$. Otherwise, we check if $t$ is an initial segment of $w$. If this is the case, we terminate and return $\one$.
	
	Let us verify that this procedure must eventually terminate for any input $z \in \nn$: If $z$ does contain a zero at position $n \in \n$, then this will be found out by stage $n + 1$ if the program did not already terminate at an earlier point. If $z$ does not contain a zero, then the instance of $f$ produced by $k(z)$ is $x$. Thus, $w$ is a solution for $f(x)$. Therefore, $L$ must contain an initial segment of $w$. At some point during the execution, this initial segment will be considered and will make our program terminate with output~$\one$.
	
	Finally, we show that our program always gives the right answer: If it returns $\zero$, then, by definition, this can only be the case if we have actually found a zero in $z$. If it returns $\one$, then we have found a finite number sequence $t$ of length $m$ in $L$ such that $t$ is an initial segment of $w$. Moreover, we know that $z$ does not contain a zero at an index below $m$ since we explicitly check for that. This will be important in a moment. By definition of $t$ and $m$ in $L$, we know that for all $n \geq m$ any solution $v \in f(y_n)$ satisfies $t \neq \restr{v}{m}$. Thus, $w$ cannot be a solution for the instance $y_n$ of $f$ for $n \geq m$. Therefore, $k(z)$ must be different from $y_n$ for $n \geq m$. By definition of $k$, this entails that any zero in $z$ must have an index below $m$. However, we explicitly ensured that this is not the case. We conclude that $z$ does not contain any zeros.
\end{proof}

\begin{lemma}\label{lem:program_llpo}
	Let $f$ be a problem that is $2$-weakly discontinuous. Then, we have $\llpo \cont f$.
\end{lemma}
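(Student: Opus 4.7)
The plan is to mimic the proof of Lemma \ref{lem:program_lpo} essentially step by step, but now tracking the parity index $l < 2$ coming from the definition of $2$-weakly (dis)continuity. I would begin by unpacking the witness: there exist $x \in \dom(f)$ and a sequence $(y_n)_{n \in \n} \subseteq \dom(f)$ with $\lim_{n \to \infty} y_n = x$ such that for every $u \in f(x)$ there are $l_u < 2$ and $m_u \geq 1$ with $\restr{u}{m_u} \neq \restr{v}{m_u}$ for all $n \geq m_u$ and all $v \in f(y_{2n + l_u})$. The map $k$ producing instances of $f$ from instances $z$ of $\llpo$ is defined by $k(z) := x$ if $z = \zero$ and $k(z) := y_j$ if the unique non-zero member of $z$ is at index $j$. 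Continuity of $k$ is verified exactly as in the $\lpo$-case: sufficiently long initial segments of $z$ either reveal the position of the non-zero (so $k(z)$ is fully determined) or force $k(z)$ into the tail of $(y_n)$, which stabilises arbitrarily close to $x$.

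For the continuous function $h$ producing solutions of $\llpo$, I would enumerate, using an oracle, the list $L$ of all triples $(t, m, l)$ such that $t \in \n^m$, $l < 2$, and for every $n \geq m$ and every $v \in f(y_{2n + l})$ one has $\restr{v}{m} \neq t$. The definition of $2$-weak discontinuity guarantees that for every $u \in f(x)$ the triple $(\restr{u}{m_u}, m_u, l_u)$ belongs to $L$. Given an instance $z$ of $\llpo$ and a solution $w$ of $f(k(z))$, the computation proceeds in stages. At stage $s$, we first scan the first $s$ positions of $z$; if a non-zero is found at some index $j$, we terminate and output $\one$ if $j$ is even and $\zero$ if $j$ is odd. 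Otherwise, we pick up the $s$-th triple $(t, m, l)$ of $L$, and if $2m \leq s$ and $\restr{w}{m} = t$, we terminate with output $\mathbf{l}$.

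Termination follows from two cases: if $z$ contains a non-zero, the scanning phase catches it; if $z = \zero$, then $k(z) = x$, so $w \in f(x)$, and the triple $(\restr{w}{m_w}, m_w, l_w) \in L$ will eventually be considered, at which stage the match is trivial. Correctness of outputs in the scanning phase is immediate. For the second termination case, observing $\restr{w}{m} = t$ with $(t, m, l) \in L$ rules out $k(z) = y_{2n+l}$ for all $n \geq m$, so that either $k(z) = x$, or $k(z) = y_j$ with $j < 2m$, or $k(z) = y_j$ with $j \bmod 2 = 1 - l$. The first two possibilities are excluded by our having already verified that $z$ has no non-zero among the first $2m$ indices; the remaining possibility places the unique non-zero of $z$ at an index of parity $1 - l$, so $z$ is zero at every index of parity $l$, and $\mathbf{l}$ is a valid answer for $\llpo(z)$.

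The main obstacle, compared with the $\lpo$-case, is purely bookkeeping: keeping the parity index $l$ aligned between the structure of the witnessing sequence $y_{2n + l}$, the definition of $k$ on $\dom(\llpo)$, and the output convention of $\llpo$ (where $\mathbf{0}$ signals zero at even indices and $\mathbf{1}$ signals zero at odd indices). Once the identification $k(z) = y_j$ with $j$ encoding both the stage $n$ and the parity $l$ is in place, the argument from Lemma \ref{lem:program_lpo} transfers essentially verbatim, with the single enhancement that the list $L$ now carries the additional parity bit used to determine the final output.
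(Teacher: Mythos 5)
Your construction follows the paper's proof of Lemma \ref{lem:program_llpo} in all essential respects: the same witness $x$ and $(y_n)_{n\in\n}$, the same instance map $k$, an oracle-coded list of excluded prefixes tagged with a parity bit, and the same correctness analysis (a match with $(t,m,l)$ rules out $k(z)=y_{2n+l}$ for $n\ge m$, so after checking that $z$ has no non-zero below index $2m$, the unique non-zero of $z$, if any, has parity $1-l$, making $\mathbf{l}$ a valid answer). The one genuine defect is in your termination argument for the case $z=\zero$. Your protocol examines the $s$-th triple $(t,m,l)$ of $L$ only at stage $s$, and allows the prefix test to fire only if $2m\le s$, because you scan just the first $s$ positions of $z$. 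If the witnessing triple $(\restr{w}{m_w},m_w,l_w)$ happens to be enumerated at a position $s<2m_w$, it is inspected exactly once, the side condition $2m\le s$ fails, and the triple is never revisited. Nothing in your setup constrains the enumeration order of $L$, and an enumeration in which every triple $(t,m,l)$ whose first component is a prefix of $w$ sits at a position below $2m$ does exist (there are at most two such triples per value of $m$ and $2m$ positions below $2m$, so a Hall-type assignment works), in which case your computation diverges on $z=\zero$. So the sentence ``at which stage the match is trivial'' does not follow as stated.

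The repair is exactly what the paper does: at the stage where $(t,m,l)$ is taken from $L$, check $z$ for a non-zero at any index below $2m$ or below $s+1$ (not merely below $s$) before performing the prefix test. Then the prefix test is never blocked by a side condition, termination on $z=\zero$ follows as you intended, correctness is unaffected (when you output $\mathbf{l}$ you still know there is no non-zero below $2m$), and $h$ remains continuous since each stage reads only finitely much of $z$, $w$, and the oracle. Alternatively, you could re-examine all triples enumerated so far at every stage, or fix an enumeration of $L$ in which the $s$-th triple always satisfies $2m\le s$, but the latter requires its own small justification. With any of these adjustments your proof is complete and coincides with the paper's.
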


\begin{proof}
	If $f$ is $2$-weakly discontinuous, then there exist an element $x \in \dom(f)$ and a sequence $(y_n)_{n \in \n} \subseteq \dom(f)$ with $\lim_{n \to \infty} y_n = x$ such that for any $u \in f(x)$ there exist $l < 2$ and $m \in \n$ such that for all $n \geq m$ any solution $v \in f(y_{2n + l})$ satisfies $\restr{u}{m} \neq \restr{v}{m}$. For the construction of the continuous function $k: \subseteq \nn \to \nn$ that produces instances of $f$, we choose
	\begin{equation*}
		k(z) :=
		\begin{cases}
			x & \text{ if $z$ only consists of zeros,}\\
			y_n & \text{ if the first non-zero value in $z$ is at index $n \in \n$.}
		\end{cases}
	\end{equation*}
	The proof that $k$ is continuous works similar to the matching step in the proof of Lemma~\ref{lem:program_lpo}.
	
	For the construction of the continuous function $h: \subseteq \nn \to \nn$ that produces solutions of $\llpo$, consider all $u \in f(x)$ such that there exist $l < 2$ and $m \in \n$ such that for all $n \geq m$ any solution $v \in f(y_{n \cdot 2 + l})$ satisfies $\restr{u}{m} \neq \restr{v}{m}$. Similar to before, for any such $u \in f(x)$ and $m \in \n$, we collect the finite number sequence $\restr{u}{m}$. Additionally, we remember the value of $l$, i.e., $L$ consists of pairs $(\restr{u}{m}, l)$ for any such $u$, $l$, and $m$.
	
	Let $z \in \nn$ be the instance of $\llpo$ and let $w \in \nn$ be the solution for $k(z)$ of $f$. Similar to before, we work in stages: At stage $s \in \n$, take the next pair $(t, l)$ consisting of a finite number sequence $t$ of length $m$ and a value $l < 2$ from $L$. First, we check if there is a non-zero value in $z$ with index below $2m$ or $s + 1$. If this is the case, we terminate and return $\zero$ (or $\one)$ if this index is odd (or even). Otherwise, we check if $t$ is an initial segment of $w$. If this is the case, we terminate and return $\zero$ (or $\one$) if $l$ is equal to zero (or one).
	
	We verify that this procedure terminates for any input $z \in \nn$. If $z$ does contain a non-zero value at index $n \in \n$, then our program terminates at stage $n+1$ if it has not already terminated. Otherwise, if $z$ only consists of zeros, then, the instance for $f$ produced by $k$ is equal to $x$. Thus, $w$ is a solution for $f(x)$. By definition of $L$, there must be some stage at which we choose a pair $(t, l)$ from $L$ such that $t$ is an initial segment of $w$. This will also make our program terminate.
	
	Finally, we show that our program always gives a correct answer: If it terminates because it has found a non-zero value in $z$ at index $n \in \n$, then, by definition of the domain of $\llpo$, we know that this is the only index at which $z$ can have a non-zero value. If $n$ is odd, then all even indices of $z$ must be equal to zero. Thus, $\zero$, which is the output of our program in this case, is the only correct answer. Similarly, if $n$ is even, our program returns the only correct answer $\one$.
	
	If it terminates because it has found a pair $(t, l)$ in $L$ such that $t$ of length $m$ is an initial segment of $w$, then we also know that every number in $z$ with index below $2m$ is equal to zero. If our program returns $\zero$, then we can infer $l = 0$. Thus, by definition of $t$, $l$, and $m$ in $L$, we know that for all $n \geq m$ any solution $v \in f(y_{2n + 0})$ satisfies $t \neq \restr{v}{m}$. Therefore, $w = y_i$ can only hold for indices $i \in \n$ that are odd or satisfy $i < 2m$. From the definition of $k$, we conclude that if $z$ has a non-zero member, then its index must be odd or lie below $2m$. Since we have already ensured that the latter case does not hold, we know that $z$ either has no non-zero member or only at an odd position. In both cases, $\zero$ is a correct solution. The argument for output $\one$, which entails $l = 1$, works analogously.
\end{proof}

\begin{lemma}\label{lem:omit_continuous}
	Let $f$ be a problem and $h, k:\subseteq \nn \to \nn$ two continuous functions. Then, we have the reduction $h \circ f \circ k \cont f$.
\end{lemma}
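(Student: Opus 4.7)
The plan is to mirror the proof of Lemma~\ref{lem:omit_computable} essentially verbatim, observing that the only property of $h$ and $k$ used there was their computability, which for continuous Weihrauch reducibility may simply be relaxed to continuity. Specifically, I will take $k$ itself as the continuous forward realizer and the map $\langle x, y \rangle \mapsto h(y)$ as the continuous backward realizer; the latter is continuous because it is the composition of $h$ with the continuous second projection of the pairing $\langle \cdot, \cdot \rangle$.

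To verify correctness, let $x \in \dom(h \circ f \circ k)$ be arbitrary. By the definition of the domain of a composition, $k(x)$ is defined and belongs to $\dom(f)$, so $k$ does indeed send instances of $h \circ f \circ k$ to instances of $f$. For any solution $y \in f(k(x))$, the same definition of $\dom(h \circ f \circ k)$ forces $y \in \dom(h)$, and then $h(y) \in (h \circ f \circ k)(x)$ is by construction a solution of the instance $x$ for $h \circ f \circ k$. Hence $k$ and $\langle x, y \rangle \mapsto h(y)$ witness the reduction $h \circ f \circ k \cont f$.

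I do not expect any genuine obstacle: continuity, like computability, is preserved under composition with the projections of the pairing function, so the argument of Lemma~\ref{lem:omit_computable} transfers immediately. The only small bookkeeping is to be explicit about the domain conditions imposed by the composition of multivalued functions, which is exactly what the previous proof already handled.
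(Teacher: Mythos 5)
Your proof is correct and follows the same route as the paper, which simply transfers the proof of Lemma~\ref{lem:omit_computable} verbatim with ``computable'' replaced by ``continuous'', using $k$ as the forward realizer and $\langle x, y \rangle \mapsto h(y)$ as the backward realizer. Your extra remark that the backward map is continuous as a composition of $h$ with the projection is a fine (implicit in the paper) piece of bookkeeping.
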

\begin{proof}
	The proof works like that of Lemma \ref{lem:omit_computable} if we replace the term ``computable'' with ``continuous''.
\end{proof}

\begin{lemma}\label{lem:combine_cont}
	Let $f_1, f_2, g_1, g_2:\subseteq \nn \rightrightarrows \nn$ be problems satisfying $f_1 \cont g_1$ and $f_2 \cont g_2$. Then, we have $f_1 * f_2 \cont g_1 * g_2$.
\end{lemma}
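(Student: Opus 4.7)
The strategy is to adapt the standard proof that $*$ is monotone under $\weih$ (which is immediate from the max characterization) to the continuous setting, by bundling the continuous realizers into an oracle carried as auxiliary data. Using the max characterization, pick $F_1 \weih f_1$ and $F_2 \weih f_2$ with $F_1 \circ F_2 \weihEq f_1 * f_2$. Since $\weih$ implies $\cont$ and $\cont$ is transitive, $F_i \cont g_i$ for $i = 1, 2$; let $(h_i, k_i)$ continuously realize these reductions. It therefore suffices to show $F_1 \circ F_2 \cont g_1 * g_2$.

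I would work with a convenient representative $G_1 \circ G_2 \weihEq g_1 * g_2$ with $G_i \weih g_i$, chosen to support arbitrary intermediate processing. Concretely, let $G_2$ send an instance $\langle a, c, x' \rangle$ (auxiliary data $a$, a code $c$ for a partial computable function, and an instance $x'$ of $g_2$) to $\{\langle a, c, x', y \rangle \mid y \in g_2(x')\}$, and let $G_1$ send $\langle a, c, x', y \rangle$ to $\{\langle a, c, x', y, u \rangle \mid u \in g_1(\phi_c(\langle a, x', y \rangle))\}$, where $\phi_c$ denotes the partial computable function coded by $c$. Both $G_i \weih g_i$ via projection and universal evaluation of $c$, and the max characterization gives $G_1 \circ G_2 \weihEq g_1 * g_2$: any other $g_1' \circ g_2'$ with $g_i' \weih g_i$ reduces to $G_1 \circ G_2$ by encoding the computable realizers of $g_1' \weih g_1$ and $g_2' \weih g_2$ into $c$ and the input $x$ into $a$.

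For the continuous reduction itself, pick an oracle $A$ relative to which both $h_2$ and $k_1$ are computable (possible since they are continuous), and let $e$ be a code for the ordinary computable function $\phi$ which, on input $\langle \langle A, x \rangle, x', y \rangle$, parses out $A$ and $x$ and uses $A$ to evaluate $k_1(h_2(\langle x, y \rangle))$. Define $K(x) := \langle \langle A, x \rangle, e, k_2(x) \rangle$, which is continuous because $A$ and $e$ are fixed constants and $k_2$ is continuous. Every solution of $G_1 \circ G_2$ on $K(x)$ has the form $\langle \langle A, x \rangle, e, k_2(x), y, u \rangle$ with $y \in g_2(k_2(x))$ and $u \in g_1(k_1(h_2(\langle x, y \rangle)))$. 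Define $H$ so that, given $x$ together with such a solution, it returns $h_1(\langle h_2(\langle x, y \rangle), u \rangle)$; this is continuous in its input since $h_1$ and $h_2$ are. Writing $v := h_2(\langle x, y \rangle) \in F_2(x) \subseteq \dom(F_1)$, the output lies in $F_1(v) \subseteq (F_1 \circ F_2)(x)$, so $(H, K)$ realizes the desired reduction. The main obstacle is selecting a representative of $g_1 * g_2$ flexible enough to absorb continuous intermediate processing; the auxiliary slot $a$ serves exactly this purpose by transporting the oracle that renders $h_2$ and $k_1$ computable.
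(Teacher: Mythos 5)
Your proof is correct, but it takes a genuinely different route from the paper. The paper never builds an explicit representative of $g_1 * g_2$: it rewrites $f_1 \circ f_2$ as a chain $h_1 \circ (\id \times g_1) \circ p \circ (\id \times g_2) \circ \langle \id, k_2 \rangle$ with continuous glue $p = \langle \id, k_1\rangle \circ h_2$, strips the outer continuous functions using the continuous analogue of the omission lemma, writes $p = p' \circ \langle c_x, \id \rangle$ with $p'$ computable and oracle $x$, commutes the constant $\langle c_x, \id\rangle$ past $\id \times g_2$ so that it too can be omitted, and finally quotes $g_1 * \id * g_2 \weihEq g_1 * g_2$ (associativity plus neutrality of $\id$). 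You instead construct a concrete maximal representative $G_1 \circ G_2 \weihEq g_1 * g_2$ carrying an auxiliary-data slot and a slot for a code $c$ of a partial computable intermediate map, verify its maximality by hand, and then define the continuous realizers $(H,K)$ directly, threading the oracle $A$ (relativizing $h_2$ and $k_1$) through the auxiliary slot; this is essentially a re-derivation of the canonical witness for the existence of compositional products in the style of Brattka--Pauly, specialized to absorb continuous glue. The underlying key idea is the same in both arguments -- turn the continuous intermediate processing into a computable one by packaging an oracle into the instance -- but the paper's version is shorter because it reuses Lemma \ref{lem:omit_continuous} and the known monoid structure, whereas yours is more self-contained and makes the maximal representative explicit (at the cost of the routine verifications you only sketch, e.g.\ that $K(x) \in \dom(G_1 \circ G_2)$ and that every $g_1' \circ g_2'$ with $g_i' \weih g_i$ reduces to $G_1 \circ G_2$; both do go through as indicated).
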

In order to prove this lemma (and also to state some later results), we introduce some common constructions of problems (cf.~\cite[Definition~2.3.1]{Bra98}):
\begin{definition}
	Let $f$ and $g$ be problems. We define
	\begin{enumerate}[label=(\roman*)]
		\item $\langle f, g \rangle:\subseteq \nn \rightrightarrows \nn$ with $\dom(\langle f, g \rangle) := \dom(f) \cap \dom(g)$ and
		\begin{equation*}
			\langle f, g \rangle(x) := \{\langle u, v \rangle \mid u \in f(x) \text{ and } v \in g(x)\}\period
		\end{equation*}
		\item $f \times g:\subseteq \nn \rightrightarrows \nn$ with $\dom(f \times g) := \{\langle x, y \rangle \mid x \in \dom(f) \text{ and } y \in \dom(g)\}$ and
		\begin{equation*}
			(f \times g)(\langle x, y \rangle) := \{\langle u, v \rangle \mid u \in f(x) \text{ and } v \in g(y)\}\period
		\end{equation*}
	\end{enumerate}
\end{definition}
Given problems $f$, $g$, $h$, and $k$, we can easily calculate
\begin{equation*}
	\langle f \circ g, h \circ k \rangle = (f \times h) \circ \langle g, k \rangle
\end{equation*}
such that even the domains are the same on both sides. Moreover, if a (continuous) Weihrauch reduction $f \weih g$ (or $f \cont g$) between problems $f$ and $g$ is realized by functions $h, k:\subseteq \nn \to \nn$, then we have
\begin{equation*}
	f(x) \supseteq (h \circ \langle \id, g \circ k \rangle)(x)
\end{equation*}
for all $x \in \dom(x)$.
\begin{proof}[Proof of Lemma \ref{lem:combine_cont}]
	W.l.o.g, assume that $f_1 \circ f_2 \weihEq f_1 * f_2$ holds. Otherwise, let $f'_1$ and $f'_2$ be problems with $f'_1 \weih f_1$, $f'_2 \weih f_2$ and $f'_1 \circ f'_2 \weihEq f_1 * f_2$ and continue with those. At the end, our claim follows from $f_1 * f_2 \weihEq f'_1 \circ f'_2 \cont g_1 * g_2$.
	
	Let $h_1, k_1$ be the continuous functions that realize the reduction $f_1 \cont g_1$ and let $h_2, k_2$ be such functions that realize $f_2 \cont g_2$. Clearly, we have
	\begin{align*}
		f_1 \circ f_2 &\weih h_1 \circ \langle \id, g_1 \circ k_1 \rangle \circ h_2 \circ \langle \id, g_2 \circ k_2 \rangle\\
		\intertext{by simply using the identity functions as realizers for the Weihrauch reduction. Now, we expand $\langle \id, g_i \circ k_i \rangle$ for $i \in \{1, 2\}$.}
		&= h_1 \circ (\id \times g_1) \circ \langle \id, k_1 \rangle \circ h_2 \circ (\id \times g_2) \circ \langle \id, k_2 \rangle\\
		\intertext{We define a continuous function $p:\subseteq \nn \to \nn$ with $p = \langle \id, k_1 \rangle \circ h_2$.}
		&= h_1 \circ (\id \times g_1) \circ p \circ (\id \times g_2) \circ \langle \id, k_2 \rangle\\
		\intertext{Using Lemma \ref{lem:omit_continuous}, we can omit both $h_1$ and $\langle \id, k_2 \rangle$.}
		&\cont (\id \times g_1) \circ p \circ (\id \times g_2)\\
		\intertext{Let $x \in \nn$ be an oracle that computes $p$, i.e., let $p':\subseteq \nn \to \nn$ be computable such that $p = p' \circ \langle c_x, \id \rangle$ holds, where $c_x$ is the problem that maps everything to~$x$.}
		&= (\id \times g_1) \circ p' \circ \langle c_x, \id \rangle \circ (\id \times g_2)\\
		\intertext{Some calculation reveals the equality $\langle c_x, \id \rangle \circ (\id \times g_2) = (\id \times (\id \times g_2)) \circ \langle c_x, \id \rangle$.}
		&= (\id \times g_1) \circ p' \circ (\id \times (\id \times g_2)) \circ \langle c_x, \id \rangle\\
		\intertext{We apply Lemma \ref{lem:omit_continuous} for a second time.}
		&\cont (\id \times g_1) \circ p' \circ (\id \times (\id \times g_2))\\
		&\weih g_1 * \id * g_2 \weihEq g_1 * g_2\period\qedhere
	\end{align*}
\end{proof}
With this lemma, we can show that compositional products for continuous Weihrauch degrees are already given by our current definition:
\begin{corollary}
	For any problems $f$ and $g$, the continuous Weihrauch degree
	\begin{equation*}
		\max{}_{\cont} \{f' \circ g' \mid f' \cont f \text{ and } g' \cont g\}
	\end{equation*}
	exists and is equal to the continuous Weihrauch degree associated with $f * g$.
\end{corollary}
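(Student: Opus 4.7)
The plan is to establish the corollary in two halves: first, that $f * g$ lies in the set $\{f' \circ g' \mid f' \cont f \text{ and } g' \cont g\}$ up to $\contEq$; second, that $f * g$ is an upper bound under $\cont$ for this set. Together these show that $f * g$ realises the desired $\cont$-maximum. Since all the heavy lifting has already been carried out in Lemma \ref{lem:combine_cont}, the remainder is essentially bookkeeping, moving between the $\weih$- and $\cont$-worlds.

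For the membership direction, I would invoke the defining property of the compositional product with respect to $\weih$: it guarantees the existence of problems $f_0$ and $g_0$ with $f_0 \weih f$, $g_0 \weih g$, and $f_0 \circ g_0 \weihEq f * g$. Since every Weihrauch reduction is in particular continuous, these witnesses also satisfy $f_0 \cont f$ and $g_0 \cont g$. Hence $f_0 \circ g_0$ belongs to the set and is $\contEq$ to $f * g$, so the candidate maximum is indeed attained.

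For the upper-bound direction, I would take an arbitrary element $f' \circ g'$ of the set and argue in two steps. Trivially $f' \circ g' \weih f' * g'$, by using $f'$ and $g'$ themselves as admissible choices in the $\weih$-maximum defining $f' * g'$; a fortiori $f' \circ g' \cont f' * g'$. Then Lemma \ref{lem:combine_cont}, applied to the assumed reductions $f' \cont f$ and $g' \cont g$, yields $f' * g' \cont f * g$. Concatenating the two gives $f' \circ g' \cont f * g$. Combined with the previous paragraph, this exhibits $f * g$ as the $\cont$-maximum of the set, proving the corollary. The only subtlety worth flagging is making sure to draw the membership witness from the $\weih$-compositional product (where existence is already known from \cite[Corollary~3.7]{BP18}) rather than trying to construct it anew in the continuous setting; but this is immediate once one notes that $\weih$-reductions are continuous.
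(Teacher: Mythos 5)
Your proposal is correct and matches the paper's proof essentially verbatim: membership comes from the $\weih$-witnesses for $f * g$ (which are in particular continuous reductions), and the upper bound follows from the chain $f' \circ g' \weih f' * g' \cont f * g$ via Lemma \ref{lem:combine_cont}. No gaps.
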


\begin{proof}
	First, let $f'$ and $g'$ be problems with $f' \weih f$ and $g' \weih g$ such that $f' \circ g' \weihEq f * g$ holds. Clearly, we have $f' \cont f$ and $g' \cont g$ and, thus, $f * g$ inhabits the set of problems that we are taking the maximum of. Now, we only have to show that any other problem in this set continuously Weihrauch reduces to $f * g$:
	Let $f'$ and $g'$ be problems with $f' \cont f$ and $g' \cont g$. By Lemma \ref{lem:combine_cont}, we have $f' \circ g' \weih f' * g' \cont f * g$. 
\end{proof}

The final ingredients for the proof of our Theorem are that, even in the context of continuous Weihrauch degrees, both $\lpo * \lpo$ and $\llpo * \llpo$ are strictly stronger than $\lpo$ and $\llpo$, respectively.
\begin{lemma}\label{lem:lpo2_stronger_than_lpo}\mbox{}
	\begin{enumerate}[label=(\roman*)]
		\item $\lpo * \lpo$ is not continuously Weihrauch reducible to $\lpo$.
		\item $\llpo * \llpo$ is not continuously Weihrauch reducible to $\llpo$.
	\end{enumerate}
\end{lemma}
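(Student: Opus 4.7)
The plan is to exhibit, for each of (i) and (ii), a natural realization of the compositional product that already fails to reduce continuously to the underlying problem. Note first that $\lpo \times \lpo \weih \lpo * \lpo$: setting $g : \langle x, y\rangle \mapsto \langle \lpo(x), y\rangle$ and $f : \langle b, y\rangle \mapsto \langle b, \lpo(y)\rangle$, both of which are Weihrauch below $\lpo$, we have $f \circ g = \lpo \times \lpo$, whence $\lpo \times \lpo \weih \lpo * \lpo$ and in particular $\lpo \times \lpo \cont \lpo * \lpo$. The analogous statement for $\llpo$ is identical, so it suffices to prove $\lpo \times \lpo \not\cont \lpo$ and $\llpo \times \llpo \not\cont \llpo$.

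For (i), I would suppose that continuous $h, k$ realize $\lpo \times \lpo \cont \lpo$ and focus on the input $a := \langle \one, \one\rangle$, whose unique solution is $\langle \one, \one\rangle$. For each $n$ let $x_n \in \nn$ be the sequence that is $1$ on $[0, n)$ and $0$ elsewhere; then $x_n \to \one$ and $\lpo(x_n) = \zero$, so $a_n^{(1)} := \langle x_n, \one\rangle \to a$ and $a_n^{(2)} := \langle \one, x_n\rangle \to a$ have unique solutions $\langle \zero, \one\rangle$ and $\langle \one, \zero\rangle$, respectively. A case split on whether $k(a)$ has a zero, combined with a sub-case on whether $k(a_n^{(j)})$ has a zero for infinitely many $n$ (for $j = 1, 2$), gives the contradiction: either continuity of $h$ at $\langle a, \zero\rangle$ or $\langle a, \one\rangle$ is directly violated along one of the two approach sequences, or the remaining ``cofinite zero'' subcase for both $j = 1$ and $j = 2$ forces $h(\langle a, \zero\rangle)$ to equal both $\langle \zero, \one\rangle$ and $\langle \one, \zero\rangle$, which is impossible.

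For (ii), I would apply the same blueprint at the base input $a := \langle \zero, \zero\rangle$, whose solution set is the full square $\{\zero, \one\}^2$. For each $(\epsilon_1, \epsilon_2) \in \{\zero, \one\}^2$ I construct an approach sequence $p_n^{(\epsilon_1, \epsilon_2)} \to a$ that forces the output $\langle \epsilon_1, \epsilon_2\rangle$, by placing the sole nonzero of each component at an even or odd position going to infinity, which pins $\llpo$ to $\{\one\}$ or $\{\zero\}$, respectively. Since $k(a) \in \dom(\llpo)$ has at most one nonzero, a case analysis on the position of that nonzero yields two kinds of cases: if $\llpo(k(a))$ is a singleton, then only one of $h(\langle a, \zero\rangle), h(\langle a, \one\rangle)$ is constrained by the reduction, and continuity forces it to equal two distinct forced outputs, a contradiction; if instead $k(a) = \zero$, then both slots are in play, and a sub-case analysis on the parity of the (possibly absent) nonzero in each $k(p_n^{(\epsilon_1, \epsilon_2)})$ shows that each of the four forced outputs must match at least one of the two slots in the limit, so that pigeonhole forces two distinct forced outputs into the same slot, contradicting the single-valuedness of $h$.

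I expect the main obstacle to be the pigeonhole bookkeeping of (ii) when $k(a) = \zero$: one must verify that regardless of how $k$ routes each approach sequence $p_n^{(\epsilon_1, \epsilon_2)}$ through its three possible sub-cases (sending $k(p_n)$ to $\zero$, or to a sequence with an even nonzero, or to one with an odd nonzero), each of the four forced outputs lands in at least one of the two $h$-slots, yielding the desired four-into-two pigeonhole contradiction.
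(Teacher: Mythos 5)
Your reduction of the lemma to the two statements $\lpo\times\lpo\not\cont\lpo$ and $\llpo\times\llpo\not\cont\llpo$ is fine, and your argument for (ii) is essentially sound; it is close in spirit to the paper's route, which proves the single stronger Lemma~\ref{lem:llpo_times_llpo_not_red_to_lpo} ($\llpo\times\llpo\not\cont\lpo$) and deduces both (i) and (ii) from it via $\llpo\weih\lpo$. The problem is your proof of (i), which has a genuine gap in the last sub-case. If $k(a)$ has no zero, then $\one$ is the only valid $\lpo$-answer for $k(a)$, so the reduction constrains $h$ only at $\langle a,\one\rangle$; the point $\langle a,\zero\rangle$ need not belong to $\dom(h)$ at all, and a partial continuous function may well take the value $\langle\zero,\one\rangle$ along one sequence in its domain converging to $\langle a,\zero\rangle$ and the value $\langle\one,\zero\rangle$ along another. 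Nothing forces $h(\langle a,\zero\rangle)$ to be defined, let alone to equal both limits, so the ``cofinite zero'' sub-case does not produce a contradiction.

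The gap is not merely presentational: your family of test inputs is too small to rule that configuration out. Let $k(\langle u,v\rangle)$ be the sequence whose $i$-th entry is $1$ iff neither $u$ nor $v$ has a zero at a position $\leq i$, and let $h$ send $\langle\langle u,v\rangle,\one\rangle$ to $\langle\one,\one\rangle$ and send $\langle\langle u,v\rangle,\zero\rangle$ to $\langle\zero,\one\rangle$, $\langle\one,\zero\rangle$, or $\langle\zero,\zero\rangle$ according to whether the least position at which $u$ or $v$ has a zero is a zero of $u$ only, of $v$ only, or of both. These partial (even computable) functions answer correctly on $a=\langle\one,\one\rangle$, on every $\langle x_n,\one\rangle$ and on every $\langle\one,x_n\rangle$, and they realize exactly your problematic sub-case, with $h$ undefined at $\langle a,\zero\rangle$; they are of course not a genuine reduction, but they only fail on inputs such as $\langle x_n,x_m\rangle$ with $n<m$, which your argument never consults. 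To repair (i) you need a second perturbation stage in which the other coordinate also acquires a zero -- this is precisely the two-stage structure of the paper's proof of Lemma~\ref{lem:llpo_times_llpo_not_red_to_lpo}, which first uses continuity to find an $n$ such that $k(\langle y_n,\zero\rangle)$ contains a zero and only then perturbs the second coordinate. Alternatively, prove that single lemma directly (your method for (ii) adapts, since at the instance $\langle\zero,\zero\rangle$ all four outputs are admissible) and obtain both parts from $\llpo\weih\lpo$, as the paper does.
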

These essentially follow from \cite[Theorem~3.8 and Theorem~5.4.2]{Weihrauch92}. In order to stay self-contained, let us quickly prove them ourselves. They are almost immediate from the following slightly stronger result:
\begin{lemma}\label{lem:llpo_times_llpo_not_red_to_lpo}
	$\llpo \times \llpo$ is not continuously Weihrauch reducible to $\lpo$.
\end{lemma}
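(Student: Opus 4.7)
The approach is by contradiction. Suppose continuous $h, k: \subseteq \nn \to \nn$ realize the reduction $\llpo \times \llpo \cont \lpo$. The centerpiece is the instance $z_0 := \langle \zero, \zero \rangle \in \dom(\llpo \times \llpo)$, at which every pair $\langle u, v \rangle$ with $u, v \in \{\zero, \one\}$ is a valid solution. I will approximate $z_0$ from four directions: for each $(u, v) \in \{0, 1\}^2$ and $n \in \n$, let $p^{(u, v)}_n := \langle s^u_n, s^v_n \rangle$ where $s^1_n := a_n$ (a single spike at the even position $2n$, forcing $\llpo$-value $\one$) and $s^0_n := b_n$ (single spike at the odd position $2n+1$, forcing $\llpo$-value $\zero$). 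Then $p^{(u,v)}_n \to z_0$, and for every $n \geq 1$ the unique valid solution of $p^{(u,v)}_n$ is $\langle u, v \rangle$.

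Set $y_0 := \lpo(k(z_0)) \in \{\zero, \one\}$ and $\langle U, V \rangle := h(\langle z_0, y_0 \rangle)$, a valid solution; by symmetry I may assume $\langle U, V \rangle = \langle \zero, \zero \rangle$. The proof splits on $y_0$.

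The easy case is $y_0 = \zero$: then $k(z_0)$ contains a zero at some position $j$, so by continuity of $k$ there is an $N$ such that $k(p^{(u,v)}_n)(j) = 0$ for every direction and every $n \geq N$; hence $\lpo(k(p^{(u,v)}_n)) = \zero$ uniformly, and the realizer invokes $h(\langle \cdot, \zero \rangle)$. Continuity of $h$ at $\langle z_0, \zero \rangle$ -- which is in $\dom(h)$ here -- forces $h(\langle p^{(u,v)}_n, \zero \rangle) \to \langle \zero, \zero \rangle$ along each of the four directions, contradicting that the reduction demands these outputs equal the four distinct pairs $\langle u, v \rangle$.

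The harder case is $y_0 = \one$: now $k(z_0)$ has no zero, and $\langle z_0, \zero \rangle$ need not lie in $\dom(h)$. For each direction $(u, v) \neq \langle \zero, \zero \rangle$, if $\lpo(k(p^{(u,v)}_n)) = \one$ held for infinitely many $n$, then continuity of $h$ at $\langle z_0, \one \rangle$ applied along that subsequence would force $\langle u, v \rangle = \langle \zero, \zero \rangle$, a contradiction. Hence for each such direction, $\lpo(k(p^{(u,v)}_n)) = \zero$ for cofinitely many $n$, and the reduction yields $h(\langle p^{(u,v)}_n, \zero \rangle) = \langle u, v \rangle$ for three distinct sequences. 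The main obstacle is completing this case: I plan to exploit the continuity of $h$ at $\langle z_0, \zero \rangle$ if it lies in $\dom(h)$ (yielding three incompatible limits), and otherwise to combine the three-direction data with the auxiliary two-solution instances $\langle \zero, a_m \rangle, \langle a_n, \zero \rangle \to z_0$ (whose valid solutions all disagree with $\langle \zero, \zero \rangle$ in a fixed output bit, so the previous pigeonhole forces cofinitely many of them into the ``$\zero$-answer'' region of $k$), thereby obtaining enough sequences in $\dom(h_0) := \dom(h(\langle \cdot, \zero \rangle))$ that a common initial segment of the required outputs can be pinned down and contradicted via the continuity of the Type-2 computation of $h$ on the shared all-zero input prefix.
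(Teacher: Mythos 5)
Your easy case ($y_0=\zero$) and the pigeonhole step in the hard case are sound (the latter mirrors the first stage of the paper's proof), but the proof is incomplete exactly where the real difficulty of the lemma sits, and the mechanism you sketch for finishing it does not work. In the case $y_0=\one$ with $\langle\langle\zero,\zero\rangle,\zero\rangle\notin\dom(h)$, the data you have collected are: infinitely many points $\langle p^{(u,v)}_n,\zero\rangle$ (and the auxiliary points $\langle\langle\zero,a_m\rangle,\zero\rangle$, $\langle\langle a_n,\zero\rangle,\zero\rangle$) lying in $\dom(h)$, on which $h$ is forced to output prescribed, mutually incompatible values. But all these points accumulate only at $\langle\langle\zero,\zero\rangle,\zero\rangle$, which in this case need not belong to $\dom(h)$; relative to $\dom(h)$ they can all be isolated, so \emph{every} assignment of outputs on them is continuous. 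In particular, continuity of $h$ (equivalently, of an oracle Type-2 machine computing it) does not let you ``pin down a common initial segment of the required outputs from the shared all-zero input prefix'': there is no uniform modulus of continuity across a family of isolated domain points, and a machine for $h$ may simply keep reading the input until it locates the spike positions before writing a single output symbol, never being obliged to produce output on the all-zero limit point since that point lies outside the domain. So no contradiction follows from the constraints you have assembled, and the sketched completion rests on a false principle.

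The paper resolves precisely this obstacle by relocating the base point into $\dom(h)$ instead of arguing around $\langle\langle\zero,\zero\rangle,\zero\rangle$. With $y_n$ the sequence whose unique nonzero entry is at index $n$, a pigeonhole/continuity argument in the first coordinate alone (second coordinate held at $\zero$) produces a single $n$ such that $x_n:=k(\langle y_n,\zero\rangle)$ contains a zero. Then $\zero$ is the valid $\lpo$-answer for $x_n$, so $\langle\langle y_n,\zero\rangle,\zero\rangle$ is \emph{guaranteed} to lie in $\dom(h)$, and one perturbs only the second coordinate: for $z_m:=k(\langle y_n,y_m\rangle)$ one has $z_m\to x_n$, so by continuity of $k$ the answer $\zero$ persists for cofinitely many $m$, while continuity of $h$ at the genuine domain point $\langle\langle y_n,\zero\rangle,\zero\rangle$ freezes the relevant output bit $b_2$ along $\langle\langle y_n,y_m\rangle,\zero\rangle$ for suitable $m$ of the parity that makes $b_2$ an invalid solution for $y_m$ — the desired contradiction. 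If you want to salvage your set-up, you need this kind of move: first force the ``$\zero$-branch'' of $h$ to be defined at a base point that still leaves one $\llpo$-coordinate free, and only then run your convergence argument at that point.
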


\begin{proof}
	Let $h, k: \subseteq \nn \to \nn$ be partial continuous functions that realize the reduction $\llpo \times \llpo \cont \lpo$. Moreover, let $(y_n)_{n \in \n} \subseteq \nn$ be a list of sequences where $y_n$ only consists of zeros except for the value at index $n$, for any $n \in \n$. First, we show that there is an $n \in \n$ such that $x_n := k(\langle y_n, \zero \rangle)$ has a zero. Assume, for contradiction, that this is not the case. If the sequence $k(\langle \zero, \zero \rangle)$ has a zero, then it is clear by continuity of $k$ that such an $n$ exists. Otherwise, $\lpo$ must reply with the solution $\one$ for this instance, and $h$ produces $b_1, b_2 \in \{\zero, \one\}$ with $\langle b_1, b_2 \rangle = h(\langle \langle \zero, \zero \rangle, \one \rangle)$. W.l.o.g., assume that $b_1$ is equal to $\zero$. Then, by continuity of $h$ and the assumption that $h$ always produces valid solutions for $\llpo \times \llpo$, there must exist some $n \in \n$ such that $\langle b_1, b_2 \rangle = h(\langle \langle y_{2n}, \zero \rangle, \one \rangle)$ holds. However, $b_1 = \zero$ is not a valid solution for the instance $y_{2n}$ of $\llpo$. Thus, $\one$ must not be a valid solution for the instance $x_{2n} = k(\langle y_{2n}, \zero \rangle)$ of $\lpo$. We conclude that $x_{2n}$ must have a zero. The argument for $b_1 = \one$ works analogously by simply using $y_{2n+1}$ instead of $y_{2n}$.
	
	Let $n \in \n$ be such that $x_n$ has a zero. Now, we consider $(z_m)_{m \in \n} \subseteq \nn$ with $z_m := k(\langle y_n, y_m \rangle)$ for any $m \in \n$. Similar to before, let $b_1, b_2 \in \{\zero, \one\}$ be such that $\langle b_1, b_2 \rangle = h(\langle \langle y_n, 0 \rangle, \zero \rangle)$ holds. Notice that the solution for $x_n$ of $\lpo$ must be $\zero$ since $x_n$ contains a zero. W.l.o.g., assume that $b_2$ is equal to $\zero$. Now, by continuity of both $k$ and $h$, there must be some $m \in \n$ such that both $z_{2m}$ has a zero (because of $\lim_{m \to \infty} z_m = x_n$) and $\langle b_1, b_2 \rangle = h(\langle \langle y_n, y_{2m} \rangle, \zero \rangle)$ holds. Since $z_{2m}$ has a zero, $\zero$ is still the valid solution for the instance $z_{2m}$ of $\lpo$. But now, the reducibility tells us that $b_2 = \zero$ is a correct solution for the instance $y_{2m}$ of $\llpo$. This leads to a contradiction. The argument for $b_2 = \one$ works analogously by using $z_{2m+1}$ instead of $z_{2m}$.
\end{proof}
Now, we can prove the previous lemma:
\begin{proof}[Proof of Lemma \ref{lem:lpo2_stronger_than_lpo}]
	It is a classical result that $\llpo \weih \lpo$ holds (cf.~\cite[Theorem~4.2]{Weihrauch92}), which we can quickly verify: Let $k: \nn \to \nn$ be some computable function that maps any instance $x \in \nn$ of $\llpo$ to $k(x) \in \nn$ with $k(x)_n = 0$ if and only if $x_n \neq 0$. If $\lpo$ tells us that $k(x)$ has a zero, then $x$ has a non-zero member. Thus, we can simply search for it and determine whether all even positions (or odd positions) in $x$ are zeros. Otherwise, if $\lpo$ tells us that $k(x)$ does not have a zero, then $x = \zero$ must hold and any of $\zero$ or $\one$ is a valid solution for $\llpo$ of $x$.
	
	Under any of the assumptions $\lpo * \lpo \cont \lpo$ or $\llpo * \llpo \cont \llpo$, we can use the previous paragraph in order to derive $\llpo * \llpo \cont \lpo$. Thus, we have (cf.~\cite[Lemma 4.3]{BGM12})
	\begin{equation*}
		\llpo \times \llpo \weih (\id \times \llpo) \circ (\llpo \times \id) \weih \llpo * \llpo \cont \lpo\period
	\end{equation*}
	Now, we simply invoke Lemma \ref{lem:llpo_times_llpo_not_red_to_lpo} and derive a contradiction.
\end{proof}
With the previous results and the obvious reductions $\lpo \weih \lpo * \lpo$ and $\llpo \weih \llpo * \llpo$, Theorem \ref{thm:lpo_no_root} actually follows from this slightly more general result:
\begin{theorem}\label{thm:no_mth_root}
	Given a number $n \geq 2$, any problem $f$ with
	\begin{enumerate}[label=(\roman*)]
		\item $\lpo \cont f \contStrict \lpo^{[n]}$ or
		\item $\llpo \cont f \contStrict \llpo^{[n]}$
	\end{enumerate}
	does not have $m$-th roots for $m \geq n$.
\end{theorem}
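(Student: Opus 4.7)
The plan is to assume, for contradiction, that a problem $f$ satisfying (say) condition~(i) admits an $m$-th root $r$ with $m\geq n$, i.e.\ $r^{[m]}\weihEq f$, and to derive $\lpo^{[n]}\cont f$, which contradicts the strict inequality $f\contStrict\lpo^{[n]}$. The argument for~(ii) will be entirely parallel: replace $\lpo$ by $\llpo$, replace ``$1$-weakly'' by ``$2$-weakly'', and invoke Lemma~\ref{lem:discontinuous}(ii) and Lemma~\ref{lem:program_llpo} in place of their $\lpo$-counterparts.

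The first step would be to show that the root $r$ must itself be $1$-weakly discontinuous. By Lemma~\ref{lem:discontinuous}(i) together with the hypothesis $\lpo\cont f$ and Lemma~\ref{lem:closure}(i), the problem $f$ is $1$-weakly discontinuous. Now if $r$ were $1$-weakly continuous, I would fix concrete representatives $r'\circ r''$ (with $r',r''\weih r$, and hence $r',r''\cont r$) used to realize each step in the compositional product $r^{[m]}$; Lemma~\ref{lem:closure}(i) transfers $1$-weak continuity from $r$ to $r'$ and $r''$, and iterating Lemma~\ref{lem:closure}(ii) would propagate $1$-weak continuity through all of $r^{[m]}$. A final application of Lemma~\ref{lem:closure}(i) to the equivalence $r^{[m]}\weihEq f$ would then make $f$ itself $1$-weakly continuous, a contradiction.

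With $r$ known to be $1$-weakly discontinuous, Lemma~\ref{lem:program_lpo} immediately gives $\lpo\cont r$. Iterating Lemma~\ref{lem:combine_cont} yields $\lpo^{[m]}\cont r^{[m]}\weihEq f$. Since $\lpo$ has computable instances (for example $\one$), the standard padding observation noted in Section~2 gives $\lpo^{[n]}\weih\lpo^{[m]}$ whenever $m\geq n$, and hence $\lpo^{[n]}\cont f$, completing the contradiction.

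There is no real obstacle here; the theorem is a clean synthesis of the lemmas already proved. The one point that deserves a little care is the verification that $k$-weak continuity is preserved under compositional products, because $*$ is only defined up to $\weihEq$; this is handled by passing to a concrete representative and combining parts~(i) and~(ii) of Lemma~\ref{lem:closure} as above.
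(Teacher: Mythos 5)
Your proposal is correct and follows essentially the same route as the paper: establish that $f$ (and hence any root $r$) must be $1$-weakly (resp.\ $2$-weakly) discontinuous via Lemma~\ref{lem:discontinuous} and an inductive use of Lemma~\ref{lem:closure} on concrete representatives of the compositional powers, then apply Lemma~\ref{lem:program_lpo} (resp.\ Lemma~\ref{lem:program_llpo}) and Lemma~\ref{lem:combine_cont} to get $\lpo^{[m]} \cont r^{[m]} \contEq f \contStrict \lpo^{[n]}$, contradicting $m \geq n$. Your explicit mention of the padding step $\lpo^{[n]} \weih \lpo^{[m]}$ only makes precise what the paper leaves implicit in its final sentence.
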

In order to derive Theorem \ref{thm:lpo_no_root} from this, use $n := 2$ together with $f := \lpo$ or $f := \llpo$.
\begin{proof}
	Assume that $\lpo \weih f \contStrict \lpo^{[n]}$ holds and that $f$ has an $m$-th root~$r$. Now, assume for contradiction that $r$ is $1$-weakly continuous. Inductively, we can show that $r^{[i]}$ is $1$-weakly continuous for any $i \geq 1$. For the induction step, let $g \weih r$ and $h \weih r^{[i]}$ be such that $g \circ h$ has degree $r^{[i+1]}$. Since $r$ and $r^{[i]}$ are $1$-weakly continuous, Lemma \ref{lem:closure} transfers this to $g$ and $h$ and, hence, to $g \circ h$ and $r^{[i+1]}$. By the same lemma, we know that $f$ must be $1$-weakly discontinuous since $\lpo$ has this property by Lemma \ref{lem:discontinuous}. Finally, for $i := m$, this leads to a contradiction.
	
	Now, by Lemma \ref{lem:program_lpo}, we have $\lpo \cont r$ and, therefore, Lemma \ref{lem:combine_cont} yields the chain of reductions $\lpo^{[m]} \cont r^{[m]} \weihEq f$. Now, by assumption, this entails $\lpo^{[m]} \cont f \contStrict \lpo^{[n]}$. Since $m \geq n$ holds, this is a contradiction.
	
	The argument for $\llpo$ is the same, we simply have to replace ``$1$-weakly continuous'' and ``Lemma \ref{lem:program_lpo}'' by ``$2$-weakly continuous'' and ``Lemma \ref{lem:program_llpo}'', respectively.
\end{proof}
\begin{corollary}
	Both $\lpo$ and $\llpo$ are $*$-irreducible (cf.~\cite[Section~6.2]{BP18}) with respect to continuous Weihrauch degrees:
	Let $f$ and $g$ be problems such that the equality $\lpo \contEq f * g$ holds. Then, we have $\lpo \contEq f$ or $\lpo \contEq g$. The statement also holds true if we replace $\lpo$ with $\llpo$.
\end{corollary}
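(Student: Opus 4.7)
The plan is to leverage the $k$-weak discontinuity framework developed for the proof of Theorem \ref{thm:no_mth_root}. Assume $\lpo \contEq f * g$. By the corollary following Lemma \ref{lem:combine_cont}, there exist $f' \cont f$ and $g' \cont g$ with $f' \circ g' \contEq f * g \contEq \lpo$. Since $\lpo$ is $1$-weakly discontinuous by Lemma \ref{lem:discontinuous}(i), Lemma \ref{lem:closure}(i) transfers this discontinuity to $f' \circ g'$. The contrapositive of Lemma \ref{lem:closure}(ii) then forces at least one of $f'$, $g'$ to be $1$-weakly discontinuous, and a second application of Lemma \ref{lem:closure}(i) pushes that property up to $f$ or $g$, respectively. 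Without loss of generality, assume $f$ is $1$-weakly discontinuous; Lemma \ref{lem:program_lpo} then immediately yields $\lpo \cont f$.

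For the converse direction $f \cont \lpo$, I would first observe that $\lpo$ has instances, so $f * g$ does too, and hence $f$ and $g$ both have instances (via the realizers of $f' \cont f$ and $g' \cont g$). With an instance $x_0 \in \dom(g)$ in hand, constant-function realizers give $\id \cont g$. Combining this with $f \cont f$ via Lemma \ref{lem:combine_cont} yields $f \contEq f * \id \cont f * g \contEq \lpo$, so $f \cont \lpo$. Together with $\lpo \cont f$ this gives $\lpo \contEq f$. In the symmetric case where $g'$ is the $1$-weakly discontinuous factor, the analogous argument produces $\lpo \contEq g$.

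The $\llpo$ case is handled verbatim, replacing ``$1$-weakly'' by ``$2$-weakly'' throughout, invoking Lemma \ref{lem:discontinuous}(ii) and Lemma \ref{lem:program_llpo} in place of their $\lpo$ counterparts.

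I do not anticipate a serious obstacle: the machinery from the preceding lemmas — especially the two closure statements of Lemma \ref{lem:closure}, the programmability results Lemmas \ref{lem:program_lpo} and \ref{lem:program_llpo}, and the monotonicity of $*$ under $\cont$ given by Lemma \ref{lem:combine_cont} — is designed exactly for this kind of factorisation argument. The one mildly delicate point worth spelling out in the final write-up is the extraction of instances for $f$ and $g$ from an instance of $f * g$, which is what guarantees $\id \cont g$ and hence the reduction $f \cont f * g$; without this, the converse inequality needed for $\lpo \contEq f$ would not come for free.
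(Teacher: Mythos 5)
Your proposal is correct and follows essentially the same route as the paper: the weak-discontinuity machinery (Lemmas \ref{lem:discontinuous}, \ref{lem:closure}, \ref{lem:program_lpo}/\ref{lem:program_llpo}) gives $\lpo \cont f$ or $\lpo \cont g$, exactly the argument the paper invokes by reference to Theorem \ref{thm:no_mth_root}, and the reverse reduction via non-empty domains and monotonicity of $*$ matches the paper's chain $\lpo \cont f \cont f * 1 \cont f * g \cont \lpo$ (you use $\id$ where the paper uses $1$, which makes no difference here). Your explicit attention to extracting instances of $f$ and $g$ from $\lpo \contEq f*g$ is precisely the point the paper handles with ``both $f$ and $g$ must have a non-empty domain.''
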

\begin{proof}
	We give the argument for $\lpo$.
	From $\lpo \contEq f * g$, we conclude that both $f$ and $g$ must have a non-empty domain. Thus, we have $1 \cont f$ and $1 \cont g$.
	Using the same argument as in Theorem \ref{thm:no_mth_root}, we have $\lpo \cont f$ or $\lpo \cont g$. In the former case, we derive $\lpo \cont f \cont f * 1 \cont f * g \cont \lpo$. In the latter case, we have $\lpo \cont g \cont 1 * g \cont f * g \cont \lpo$. The proof for $\llpo$ works analogously.
\end{proof}
The proof of Theorem \ref{thm:no_mth_root} can be turned into a second proof of Theorem \ref{thm:uncount_no_root} by considering problems $\lpo \times \dg_a$ for non-zero Turing degrees $a$, where $\dg_a$ is the constant multi-valued function that maps everything to all number sequences of degree $a$. Still, the proof from the previous section is much simpler and the considered problems $w_a$ have another interesting property: While they do not have roots in the Weihrauch lattice, they do have roots in the continuous Weihrauch lattice: For this, we prove $w_a \contEq \id$. The direction $\id \cont w_a$ is trivial and the reduction $w_a \cont \id$ is realized by continuous functions $h$ and $k$ with $h := c_x$ for a number sequence $x$ of Turing degree $a$ and $k := \id$. Clearly, $\id$ is its own $n$-th root for $n \geq 2$. Thus, $w_a$ has all roots in the continuous Weihrauch lattice, in contrast to $\lpo \times \dg_a$.

\section{Finitely many compositions of $\lpo$}

In this section, we want to prove Theorem \ref{thm:some_roots}. An important ingredient for this result is the following theorem:
\begin{theorem}\label{thm:lpo_n_strict_lpo_n_1}
	For any $n \in \n$, we have $\lpo^{[n]} \weihStrict \lpo^{[n+1]}$. The same result holds if we replace $\weihStrict$ with $\contStrict$.
\end{theorem}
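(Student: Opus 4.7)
The reduction $\lpo^{[n]} \weih \lpo^{[n+1]}$ is immediate from $\id \weih \lpo$ and the monotonicity of $*$. Since any Weihrauch reduction is also a continuous Weihrauch reduction, both strictness claims follow from the single statement $\lpo^{[n+1]} \not\cont \lpo^{[n]}$, which is what I would try to prove.

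The plan is to reduce this to a non-reduction for parallel products. Using $\llpo \weih \lpo$ (shown inside the proof of Lemma~\ref{lem:lpo2_stronger_than_lpo}) and iterating the standard reduction $f \times g \weih f * g$ realized by $(\id \times f) \circ (g \times \id)$ (as implicitly invoked in the proof of Lemma~\ref{lem:lpo2_stronger_than_lpo}), one obtains
\begin{equation*}
\llpo^{\times(n+1)} \weih \lpo^{\times(n+1)} \weih \lpo^{[n+1]}\period
\end{equation*}
It therefore suffices to prove the $(n+1)$-fold strengthening of Lemma~\ref{lem:llpo_times_llpo_not_red_to_lpo}, namely
\begin{equation*}
\llpo^{\times(n+1)} \not\cont \lpo^{[n]}\period
\end{equation*}

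I would establish this by induction on $n$, with the base case $n=1$ being exactly Lemma~\ref{lem:llpo_times_llpo_not_red_to_lpo}. For the inductive step, suppose continuous realizers $h, k$ witness $\llpo^{\times(n+1)} \cont \lpo^{[n]}$. I would extend the two-stage fooling argument of Lemma~\ref{lem:llpo_times_llpo_not_red_to_lpo} to $n+1$ stages: at stage $j$, vary the $j$-th $\llpo$-coordinate along a sequence of shifted non-zero instances converging to $\zero$ while the other coordinates remain pinned at choices fixed in earlier stages. Continuity of $h$ together with a pigeonhole over the two possible answers of the next $\lpo$-query inside $\lpo^{[n]}$ forces, at each stage, the $\llpo$-answer on the current coordinate to be aligned with one bit of the $\lpo^{[n]}$-output. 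After $n$ stages all $n$ $\lpo$-queries are exhausted; the remaining $(n+1)$-st $\llpo$-coordinate then has no query left to distinguish its parity cases, producing the final contradiction exactly as in the proof of Lemma~\ref{lem:llpo_times_llpo_not_red_to_lpo}.

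The main obstacle is keeping track, across the $n+1$ stages, of the nested families of converging sequences together with the $\lpo^{[n]}$-answer prefixes that have been committed so far. Each stage should effectively replace the ambient reduction by one conditioned on a specified initial segment of the $\lpo^{[n]}$-output, in such a way that the restricted continuous realizers obtained at stage $j$ still witness a genuine reduction $\llpo^{\times(n+1-j)} \cont \lpo^{[n-j]}$ on the surviving coordinates. Verifying that the inductive hypothesis applies to this restricted reduction, and that the final coordinate is genuinely free when all $\lpo$-queries have been frozen, is where the bookkeeping is delicate; beyond that, the argument is a direct amplification of the two-stage contradiction already present in the proof of Lemma~\ref{lem:llpo_times_llpo_not_red_to_lpo}.
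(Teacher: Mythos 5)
Your opening reductions are fine, but the lemma your whole induction is aimed at, $\llpo^{\times(n+1)} \not\cont \lpo^{[n]}$, is \emph{false} for every $n \geq 2$, so the argument cannot get past its base case $n=1$. The obstruction is the classical cardinality (counting) trick for turning parallel queries into adaptive ones. To solve $m$ parallel copies of $\llpo$ it suffices to know the number $N \in \{0,\dots,m\}$ of coordinates whose unique non-zero entry sits at an \emph{even} index: a terminating search then locates these $N$ coordinates, they receive the answer $\one$ (valid, since all their odd positions are zero), and every other coordinate receives $\zero$ (valid, since it has no non-zero at an even index). Each question ``$N \geq t$?'' is a $\Sigma^0_1$ property of the input and hence a single $\lpo$-query, so $N$ can be determined by adaptive binary search with $n$ uses of $\lpo$ whenever $m+1 \leq 2^n$; all pre-, intermediate and post-processing maps are computable partial functions, so this two-(or $n$-)round adaptive procedure witnesses $\llpo^{\times m} \weih \lpo^{[n]}$ for all $m \leq 2^n-1$ (and likewise $\lpo^{\times m} \weih \lpo^{[n]}$). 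In particular $\llpo^{\times 3} \weih \lpo^{[2]}$: ask ``do at least two coordinates have an even-position non-zero?'', then, depending on the answer, ask ``at least one?'' or ask about the single remaining coordinate --- every branch ends with a valid triple. So the claimed $(n+1)$-fold strengthening of Lemma~\ref{lem:llpo_times_llpo_not_red_to_lpo} fails already at $n=2$, and no bookkeeping in the $(n+1)$-stage fooling argument can repair it; concretely, your pigeonhole does not close because a single adaptive query may speak about \emph{all} coordinates simultaneously (via counting), and along a nested sequence of ever-later perturbations a pair of adaptive answers can change three times, which is exactly what the counting strategy exploits.

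This is also why the paper does not try to separate $\lpo^{[n+1]}$ from $\lpo^{[n]}$ by exhibiting something of ``parallel width'' $n+1$ inside $\lpo^{[n+1]}$: compositional powers absorb parallel products of exponentially larger width, so any benchmark growing linearly in the number of $\times$-factors is far too weak. Instead the paper uses the $k$-continuity level as an invariant (Lemmas~\ref{lem:lpo_2_continuous}--\ref{lem:k_continuous_transports}, Corollary~\ref{cor:lpo_n_finitely_continuous}, Lemma~\ref{lem:lpo_n_not_n_continuous}) together with a collapse-propagation step: if $\lpo^{[n]} \contEq \lpo^{[n+1]}$ for some $n$, then $\lpo^{[n]} \contEq \lpo^{[m]}$ for \emph{all} $m>n$; but $\lpo^{[n]}$ is $k$-continuous for some finite $k$, while $\lpo^{k} \weih \lpo^{[k]} \weih \lpo^{[m]}$ for $m := \max(k,n+1)$ is not $k$-continuous --- a contradiction. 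If you want a direct inductive separation in your style, you would have to fool reductions from benchmarks of width about $2^n$ (for instance iterated counting problems with $2^{n+1}$ outcomes), not width $n+1$; at that point you are essentially reproving the level computation that the paper imports from Weihrauch's work.
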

To the best of our knowledge, a proof for this result does not appear in literature, yet. For our argument, we will need many ideas from \cite[Section~3]{Weihrauch92}. But first, we introduce the realizer-based definition of Weihrauch reducibility:
\begin{definition}
	Given a problem $f$, we call $r:\subseteq \nn \to \nn$ a \emph{realizer} of $f$ if and only if $r(x) \in f(x)$ for any $x \in \dom(f)$.
\end{definition}
For any instance $x \in \dom(f)$, a realizer chooses some solution in $f(x)$ explicitly.
\begin{lemma}
	Given two problems $f$ and $g$, we have $f \weih g$ if and only if there are computable functions $h, k:\subseteq \nn \to \nn$ such that any realizer of $h \circ \langle \id, r \circ k \rangle$ is also a realizer of $f$. Similarly, we have $f \cont g$ if and only if the statement holds for continuous functions $h$ and $k$.
\end{lemma}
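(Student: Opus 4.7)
My plan is to verify both directions of the equivalence directly from the definitions. The two statements (computable and continuous) have parallel proofs that differ only in which regularity class $h$ and $k$ are drawn from; importantly, the realizer $r$ of $g$ is always quantified as an arbitrary single-valued selection, so the same argument works in both settings.

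For the forward direction, I assume $f \weih g$ is witnessed by computable $h, k$ and fix an arbitrary realizer $r$ of $g$. For each $x \in \dom(f)$ the definition forces $k(x) \in \dom(g)$, hence $r(k(x))$ is defined and lies in $g(k(x))$. Applying the second clause of the Weihrauch reduction to the solution $r(k(x))$ yields $h(\langle x, r(k(x))\rangle) \in f(x)$. Thus $h \circ \langle \id, r \circ k \rangle$ is defined on all of $\dom(f)$ with values inside $f$, so every one of its realizers (each of which agrees with it on its domain and extends to $\dom(f)$) is automatically a realizer of $f$.

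For the backward direction, suppose computable $h, k$ satisfy the realizer-based condition. I first argue that $k(x) \in \dom(g)$ for every $x \in \dom(f)$. If not, pick a realizer $r_0$ of $g$ with $\dom(r_0) = \dom(g)$ (such a ``minimal'' realizer exists by the axiom of choice); then $r_0 \circ k$, and hence $h \circ \langle \id, r_0 \circ k \rangle$, fails to be defined at the bad $x$. Since realizers of the composition are only required to extend it on its own domain, I may pick one that remains undefined at $x$; but such a function has domain not containing $\dom(f)$, so it is not a realizer of $f$, contradicting the hypothesis. Next, given $x_0 \in \dom(f)$ and $y_0 \in g(k(x_0))$, I construct a realizer $r$ of $g$ with $r(k(x_0)) = y_0$, choosing $r(z) \in g(z)$ arbitrarily for the remaining $z \in \dom(g)$. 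The hypothesis then forces $h(\langle x_0, y_0 \rangle) = (h \circ \langle \id, r \circ k \rangle)(x_0) \in f(x_0)$, which is exactly the original defining condition of $f \weih g$.

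No serious obstacle arises, but one point warrants care: the realizer $r$ of $g$ is not asked to be computable or continuous. This is essential for the backward direction, where $r$ is chosen pointwise to hit the prescribed value $y_0$, and it explains why the equivalence for $\cont$ goes through by the same argument after merely replacing ``computable'' with ``continuous'' throughout.
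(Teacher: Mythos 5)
Your proof is correct and follows the standard argument: the paper itself gives no proof beyond citing \cite[Proposition~11.3.2]{BGP} and noting it is ``a simple application of the axiom of choice,'' which is exactly what you carry out (using choice to build the realizers $r_0$ and $r$ in the backward direction, and treating the computable and continuous cases uniformly). No gaps worth flagging; the only compressed step -- that the hypothesis also forces $x_0 \in \dom(h \circ \langle \id, r \circ k\rangle)$ in the second part of the backward direction -- is covered by the same ``undefined realizer'' trick you already spelled out.
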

The proof is based on a simple application of the axiom of choice (cf.~\cite[Proposition~11.3.2]{BGP}). Next, we define $k$-continuous problems (cf.~\cite[Definition~3.3]{Weihrauch92}):
\begin{definition}
	Given $k \in \n$, we say that a problem $f$ is $k$-continuous if and only if it has a realizer $r$ such that we can find a partition $P$ of $\dom(r)$ with $|P| \leq k$ such that $r$ is continuous when restricted to any element of $P$.
\end{definition}
\begin{lemma}\label{lem:lpo_2_continuous}
	$\lpo$ is $2$-continuous.
\end{lemma}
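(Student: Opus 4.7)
The plan is to exhibit a realizer of $\lpo$ together with a partition of its domain (of size at most $2$) such that the realizer is continuous on each piece. Since $\dom(\lpo) = \nn$, I would first fix the obvious realizer $r:\nn \to \nn$ defined by $r(x) := \zero$ whenever $x$ has a zero, and $r(x) := \one$ otherwise. By definition of $\lpo$, this is indeed a realizer.

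Next, I would set $A := \{x \in \nn \mid x \text{ has a zero}\}$ and $B := \{x \in \nn \mid x \text{ has no zero}\}$. Clearly $\{A, B\}$ is a partition of $\dom(r) = \nn$ with exactly two (non-empty) elements, so $|P| \leq 2$ is satisfied.

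The only thing left is to check that $r$ is continuous on each of $A$ and $B$. But this is immediate: $r$ is constantly $\zero$ on $A$ and constantly $\one$ on $B$, so the restrictions $\restr{r}{A}$ and $\restr{r}{B}$ are constant functions, which are trivially continuous.

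There is no real obstacle here; the lemma is essentially a rephrasing of the fact that although $\lpo$ is globally discontinuous at every $x \in B$ (as witnessed by Lemma~\ref{lem:discontinuous}), the discontinuity lies entirely in deciding \emph{which} of the two constant branches one is on. Once that decision is fixed by the partition, continuity on each branch comes for free.
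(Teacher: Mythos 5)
Your proof is correct and follows the same approach as the paper's: partition $\nn$ into the sequences with a zero and those without, and observe that the (unique) realizer of $\lpo$ is constant, hence continuous, on each piece.
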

\begin{proof}
	Let $X_1 \subset \nn$ be the set of all number sequences that do not have a zero. Let $X_2 \subset \nn$ be the set of all number sequences that do have a zero. Clearly, $P := \{X_1, X_2\}$ is a partition of $\nn$ and $\lpo$ is constant on each restriction $X_1$ and~$X_2$.
\end{proof}
\begin{lemma}\label{lem:k_continuous_transports}
	Let $f$ and $g$ be problems.
	\begin{enumerate}[label=(\roman*)]
		\item If $f \cont g$ and $g$ is $k$-continuous for some $k \in \n$, then $f$ is also $k$-continuous.
		\item If $f$ and $g$ are $k$-continuous for some $k \in \n$, then there is some $l \in \n$ such that $f \circ g$ is $l$-continuous.
	\end{enumerate}
\end{lemma}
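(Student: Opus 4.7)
My plan is to exploit the realizer-based characterization of (continuous) Weihrauch reducibility stated just before the lemma, together with the observation that pre- and post-composing a continuous map with a function that is continuous on a piece of its domain keeps continuity on that piece.

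For part (i), suppose $g$ is $k$-continuous, witnessed by a realizer $r_g$ and a partition $\{X_1, \dots, X_k\}$ of $\dom(r_g)$ such that each restriction $\restr{r_g}{X_i}$ is continuous. Let $H, K \colon \subseteq \nn \to \nn$ be continuous functions realizing $f \cont g$, so that $r_f(x) := H(\langle x, r_g(K(x))\rangle)$ is a realizer of $f$. The plan is to pull back the partition through $K$: set
\begin{equation*}
	Y_i := \{x \in \dom(r_f) \mid K(x) \in X_i\}\comma
\end{equation*}
for $i \in \{1, \dots, k\}$. These sets form a partition of $\dom(r_f)$ of size at most $k$ (some may be empty). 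On each $Y_i$, the map $x \mapsto r_g(K(x))$ is continuous as the composition of the continuous map $K$ with $\restr{r_g}{X_i}$, and hence $\restr{r_f}{Y_i}$ is continuous as the composition of continuous maps $H$ and $\langle \id, \restr{r_g}{X_i} \circ K \rangle$. This witnesses that $f$ is $k$-continuous.

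For part (ii), I take realizers $r_f$ of $f$ and $r_g$ of $g$ with witnessing partitions $\{X_1, \dots, X_k\}$ and $\{Y_1, \dots, Y_k\}$ of their respective domains. The natural candidate realizer for $f \circ g$ is $r := r_f \circ r_g$; this is well-defined on $\dom(f \circ g)$ because for $x \in \dom(f \circ g)$ we have $r_g(x) \in g(x) \subseteq \dom(f) = \dom(r_f)$, and then $r(x) = r_f(r_g(x)) \in f(r_g(x)) \subseteq (f \circ g)(x)$. Now I refine the partition: for each pair $(i, j) \in \{1, \dots, k\}^2$, let
\begin{equation*}
	Z_{i, j} := \{x \in Y_j \cap \dom(r) \mid r_g(x) \in X_i\}\period
\end{equation*}
These sets partition $\dom(r)$ into at most $k^2$ pieces. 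On $Z_{i, j}$, the restriction of $r_g$ agrees with $\restr{r_g}{Y_j}$ and is hence continuous, and its image lies in $X_i$, where $\restr{r_f}{X_i}$ is continuous; therefore $\restr{r}{Z_{i, j}}$ is continuous. So $l := k^2$ works.

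The only real subtlety is making sure the realizer-based definition gives us a single-valued realizer $r_f$ of $f$ (rather than just a multi-valued witness), which is precisely what the lemma preceding this one provides; after that, everything reduces to the bookkeeping of tracing a single partition (or a product of two) through continuous compositions, so I do not expect any genuine obstacle.
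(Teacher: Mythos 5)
Your proof is correct and follows essentially the same route as the paper: in (i) the realizer $h \circ \langle \id, r_g \circ K\rangle$ with the pulled-back partition $\{K^{-1}(X_i)\}$, and in (ii) the realizer $r_f \circ r_g$ with the common refinement of the two partitions under $r_g$-preimages, giving $l = k^2$. No gaps worth noting.
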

\begin{proof}
	For (i), assume that $f$ and $g$ are $i$-continuous for some $i \in \n$. Let $h, k$ realize the Weihrauch reduction $f \cont g$. Let $r$ be a realizer of $g$ that is continuous on every element of the partition $P$ of $\dom(g)$ with $|P| \leq k$. Clearly, this property is transferred to the realizer $h \circ \langle \id, r \circ k \rangle$ of $f$ with partition $P' := \{k^{-1}(X) \mid X \in P\}$.
	
	For (ii), assume that $f$ and $g$ are $k$-continuous for some $k \in \n$. Let $r_f$ and $r_g$ be realizers of $f$ and $g$ such that $r_f$ and $r_g$ are continuous on every element of the partitions $P_f$ and $P_g$ with $|P_f|, |P_g| \leq k$, respectively. Define
	\begin{equation*}
		P'_g := \{X \cap g^{-1}(Y) \mid X \in P_g \text{ and } Y \in P_f\} \setminus \{\emptyset\}\period
	\end{equation*}
	Clearly, $r_g$ is still continuous on every element of $P'_g$. Moreover, $r_g(X)$ is a subset of some element in $P_f$ for any $X \in P'_g$. Thus, $r_f \circ r_g$ is continuous on every $X \in P'_g$. Since $r_f \circ r_g$ is a realizer of $f \circ g$, we conclude that $f \circ g$ is $|P'_g|$-continuous.
\end{proof}
With this, we know that if a problem is $k$-continuous for $k \in \n$, then this property is shared with all other problems of the same degree.
\begin{corollary}\label{cor:lpo_n_finitely_continuous}
	For any $n \in \n$, there is some number $k \in \n$ such that $\lpo^{[n]}$ is $k$-continuous.
\end{corollary}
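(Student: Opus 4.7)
The plan is to prove the corollary by induction on $n$, combining Lemma~\ref{lem:lpo_2_continuous} with both parts of Lemma~\ref{lem:k_continuous_transports}. The base case will cover $n = 0$ (where $\lpo^{[0]} = \id$ is trivially $1$-continuous, since $\id$ is everywhere continuous) and $n = 1$ (which is exactly Lemma~\ref{lem:lpo_2_continuous}).

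For the inductive step, suppose $\lpo^{[n]}$ is $k$-continuous for some $k \in \n$. By the definition of the compositional product, I can pick problems $f$ and $g$ with $f \weih \lpo$ and $g \weih \lpo^{[n]}$ such that $f \circ g$ has Weihrauch degree $\lpo * \lpo^{[n]} \weihEq \lpo^{[n+1]}$. Since Weihrauch reducibility implies continuous Weihrauch reducibility, Lemma~\ref{lem:k_continuous_transports}(i) applied to $f \cont \lpo$ gives that $f$ is $2$-continuous, and applied to $g \cont \lpo^{[n]}$ gives that $g$ is $k$-continuous. Taking $k' := \max(2, k)$, both $f$ and $g$ are $k'$-continuous, so by Lemma~\ref{lem:k_continuous_transports}(ii) the composition $f \circ g$ is $l$-continuous for some $l \in \n$. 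Finally, since $\lpo^{[n+1]} \weihEq f \circ g$ we also have $\lpo^{[n+1]} \cont f \circ g$, so one more application of Lemma~\ref{lem:k_continuous_transports}(i) yields that $\lpo^{[n+1]}$ is $l$-continuous, completing the induction.

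There is no real obstacle here: every necessary transfer property has already been established, and the structure of the argument is exactly the standard inductive use of closure under composition plus downward transfer along reductions. The only minor subtlety is that Lemma~\ref{lem:k_continuous_transports}(ii) as stated requires both problems to be $k$-continuous for the \emph{same} $k$, which is why I pass to $k' := \max(2, k)$ before composing; this is harmless because being $k$-continuous obviously implies being $k'$-continuous for any $k' \geq k$ (one may pad the partition with empty sets, or simply note that the same realizer and partition witness the larger bound).
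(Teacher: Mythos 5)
Your proof is correct and follows essentially the same route as the paper: induction on $n$, using Lemma~\ref{lem:lpo_2_continuous} together with both parts of Lemma~\ref{lem:k_continuous_transports} on a composition $f \circ g$ realizing the degree $\lpo^{[n+1]}$. Your extra care about the shared bound (passing to $\max(2,k)$) is harmless and in fact automatic, since the definition of $k$-continuity only requires a partition of size $\leq k$, so $k$-continuity already implies $k'$-continuity for all $k' \geq k$.
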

\begin{proof}
	For $n := 0$, we have $\lpo^{[n]} \weihEq \id$, which is clearly ($1$-)continuous. Now, for the induction step, let $f \weih \lpo$ and $g \weih \lpo^{[n]}$ such that $f \circ g \weihEq \lpo^{[n+1]}$ holds. By Lemmas \ref{lem:lpo_2_continuous} and \ref{lem:k_continuous_transports}, we know that $f \circ g$ must be $k$-continuous for some $k \in \n$.
\end{proof}
\begin{lemma}\label{lem:lpo_n_not_n_continuous}
	Given $n \geq 1$, let $\lpo^{n}$ denote the combination of $n$-many copies of $\lpo$ using the $\times$-operator. This problem is not $n$-continuous.
\end{lemma}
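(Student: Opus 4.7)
The plan is to argue by contradiction. Suppose $\lpo^n$ has an $n$-continuous realizer, witnessed by a partition $\nn = X_1 \sqcup \cdots \sqcup X_n$ on each piece of which the (unique, since $\lpo^n$ is single-valued) realizer $r$ is continuous. Because $r$ takes only the $2^n$ values of $\{\zero, \one\}^n$, continuity of $r|_{X_j}$ amounts to local constancy on each piece: for every $p \in X_j$ there is a cylinder $U \subseteq \nn$ around $p$ such that every point of $U \cap X_j$ already realizes the value $r(p)$. This is the only property of the partition I will exploit.

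I will then build inductively a chain of witnesses $p_0, p_1, \ldots \in \nn$ together with strictly increasing subsets $\emptyset = S_0 \subsetneq S_1 \subsetneq \cdots$ of $\{1, \ldots, n\}$ with $|S_k| = k$ and $r(p_k)$ having zeros exactly at the coordinates in $S_k$, pairwise distinct piece indices $\pi_0, \pi_1, \ldots \in \{1, \ldots, n\}$ with $p_k \in X_{\pi_k}$, and cylinder depths $M_0, M_1, \ldots$ witnessing local constancy at each $p_k$. I take $p_0 := \langle \one, \ldots, \one\rangle$; the piece $X_{\pi_0}$ and depth $M_0$ are furnished by continuity at $p_0$. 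At step $k \to k+1$ I pick any $i_{k+1} \in \{1, \ldots, n\} \setminus S_k$ and obtain $p_{k+1}$ from $p_k$ by inserting a single zero in coordinate $i_{k+1}$ at a position $N_k$ chosen strictly larger than $\max(M_0, \ldots, M_k)$. This choice guarantees that $p_{k+1}$ agrees with each earlier $p_l$ on a prefix of length $M_l$, so $p_{k+1}$ lies in every previously fixed cylinder. On the other hand $r(p_{k+1})$ has strictly more zeros than each $r(p_l)$, so local constancy on $X_{\pi_l}$ inside its cylinder forces $p_{k+1} \notin X_{\pi_l}$ for every $l \leq k$. Consequently $p_{k+1}$ belongs to a piece $X_{\pi_{k+1}}$ distinct from all previous ones, and continuity there furnishes the next depth $M_{k+1}$.

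Running the inductive step $n$ times exhausts the partition: the indices $\pi_0, \ldots, \pi_{n-1}$ are pairwise distinct and therefore equal to $\{1, \ldots, n\}$. Applying the construction one more time yields a point $p_n$ whose $r$-value has zeros in every coordinate; the same argument gives $p_n \notin X_{\pi_l}$ for every $l < n$, so $p_n$ avoids all $n$ pieces, contradicting $p_n \in \nn = \bigsqcup_j X_j$. The only real obstacle is the bookkeeping of the cylinder depths: each newly planted zero must lie strictly past every earlier $M_l$ so that modifying $p_k$ into $p_{k+1}$ does not leave any previously chosen cylinder, and this is precisely what the recipe $N_k > \max(M_0, \ldots, M_k)$ arranges. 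Once the bookkeeping is set up, local constancy on each piece does the rest of the work essentially for free.
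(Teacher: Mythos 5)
Your argument is correct, but it takes a genuinely different route from the paper. The paper does not argue about $\lpo^{n}$ directly: it introduces the auxiliary zero-counting problem $f$ (defined on sequences with at most $n$ zeros, returning the number of zeros), shows $f \weih \lpo^{n}$, cites Weihrauch's result that $f$ is not $n$-continuous, and transfers this downward along the reduction via Lemma~\ref{lem:k_continuous_transports}(i). You instead reprove the combinatorial core by hand: since $\lpo^{n}$ is single-valued with only $2^{n}$ possible answer codes, continuity of the realizer on each piece is local constancy (on points of $\dom(\lpo^{n})$), and your chain $p_0,\dots,p_n$, obtained by planting one new zero per step at a depth beyond all previously fixed cylinder moduli, forces $n+1$ pairwise distinct pieces, contradicting $|P|\leq n$. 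This is essentially the ``mind-change'' argument underlying Weihrauch's theorem, executed directly; what it buys is a self-contained proof that needs neither the counting problem nor the transfer lemma, at the cost of more bookkeeping, while the paper's version is shorter and reuses existing machinery. Two small points to make explicit if you write this up: local constancy requires fixing the uniform separation modulus of the finitely many answer codes, and should be asserted only for points of $\dom(\lpo^{n})$, since the realizer's domain (and hence the partition) may be larger; and the claim that inserting a zero in one component at position $N_k$ preserves all code prefixes of length $\leq \max(M_0,\dots,M_k)$ depends on the pairing $\langle\cdot\rangle$ being monotone in the sense that component position $N_k$ maps to code position at least $N_k$, which holds for the standard interleaving but deserves a sentence.
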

\begin{proof}
	Let $f:\subseteq \nn \to \nn$ be the problem whose domain contains exactly those number sequences with at most $n$-many zeros. Given such a number sequence $x \in \dom(f)$, we define $f(x) := \mathbf{m}$ for the number $m$ of zeros in $x$.
	
	First, we show that $f$ reduces to $\lpo^{n}$. For every $i < n$, let $k_i: \nn \to \nn$ be the computable function that maps any number sequence $x \in \nn$ to the sequence $k_i(x)$ that is defined like $x$ but where the first $i$-many zeros are skipped. We apply $\lpo^{n}$ to $\langle k_0(x), \dots, k_{n-1}(x) \rangle$ to some arbitrary $x \in \dom(f)$. If $\lpo^{n}$ tells us that $k_0(x)$, i.e.~$x$ itself, does not have a zero, then we know that $f(x) = \zero$ holds. Otherwise, let $i < n$ be the largest index such that $k_i(x)$ has no zero. Thus, skipping $i$-many zeros in $x$ results in a sequence without zeros but skipping $(i+1)$-many zeros in $x$ yields a sequence with zeros (or, of course, we have $i = n$). We conclude that $x$ has exactly $i$-many zeros. Therefore, we have $f(x) = \mathbf{i}$.
	
	Now, we apply \cite[Theorem~3.5]{Weihrauch92}, which tells us that $f$ is not $n$-continuous. Finally, with $f \weih \lpo^{n}$ and Lemma \ref{lem:k_continuous_transports}, we conclude that $\lpo^{n}$ is not $n$-continuous.
\end{proof}
\begin{proof}[Proof of Theorem \ref{thm:lpo_n_strict_lpo_n_1}]
	Assume for contradiction that there is some $n \in \n$ such that $\lpo^{[n]} \weihStrict \lpo^{[n+1]}$ or $\lpo^{[n]} \contStrict \lpo^{[n+1]}$ does not hold. In this case, we have $\lpo^{[n]} \contEq \lpo^{[n+1]}$. Moreover, by composing with $m$-many copies of $\lpo$, we get $\lpo^{[n+m]} \contEq \lpo^{[n+m+1]}$. Thus, we have $\lpo^{[n]} \contEq \lpo^{[m]}$ for any $m > n$.
	
	By Corollary \ref{cor:lpo_n_finitely_continuous}, we find some number $k \in \n$ such that $\lpo^{[n]}$ is $k$-continuous. By Lemma \ref{lem:lpo_n_not_n_continuous}, we know that $\lpo^{k}$ is not $k$-continuous. This problem reduces to $\lpo^{[k]}$ (cf.~\cite[Lemma 4.3]{BGM12}). Let $m := \max(k, n+1)$. With $\lpo^{[k]} \weih \lpo^{[m]}$, we conclude that $\lpo^{[m]}$ is not $k$-continuous. This leads to a contradiction: Since $m > n$ holds, $\lpo^{[n]}$ and $\lpo^{[m]}$ must be equivalent degrees.
\end{proof}
\begin{proof}[Proof of Theorem \ref{thm:some_roots}]
	Let $m \geq 2$. We apply Theorem \ref{thm:no_mth_root} for $f := \lpo^{[m]}$ and $n := m + 1$. By Theorem \ref{thm:lpo_n_strict_lpo_n_1}, we know that this a valid instance since the reductions $\lpo \cont \lpo^{[m]} \contStrict \lpo^{[m+1]}$ hold. Now, we conclude that $\lpo^{[m]}$ does not have an $(m+1)$-th root. However, it clearly has an $m$-th root.
\end{proof}

\printbibliography

\end{document}